\newenvironment{keywords}{
  \vspace{2mm}
  \noindent
  \keywordsname: 
  \itshape\small
}
\newenvironment{mathsubclass}{
  \small
  \noindent
  \mathsubclassname: 
}
 \def\keywordsname{\textbf{Keywords}}
  \def\mathsubclassname{\textbf{2010 AMS Subject Classification}}
\newtheorem{teo}{Theorem}[section]
\newtheorem{coro}{Corollary}[section]
\newtheorem{obs}{Remark}[section]
\newcommand\be{\begin{equation}}
\newcommand\ee{\end{equation}}
\begin{document}

\title{One-phase Stefan-like problems with a  latent heat depending on
 the position and velocity of the free boundary,  and with  Neumann or Robin boundary conditions at the fixed face}

\author{
Julieta Bollati$^{1}$,  Domingo A. Tarzia $^{1}$\\ \\
\small {{$^1$} Depto. Matem\'atica - CONICET, FCE, Univ. Austral, Paraguay 1950} \\  
\small {S2000FZF Rosario, Argentina.}\\
\small{Email: JBollati@austral.edu.ar; DTarzia@austral.edu.ar.} 
}
\date{}

\maketitle

\abstract{In this paper, a one-phase Stefan-type problem for a semi-infinite material  which has as its main feature a variable latent heat that depends on the power of the position and the velocity of the moving boundary is studied. Exact solutions of similarity type are obtained for the cases when  Neumann or Robin boundary conditions are imposed at the fixed face. Required relationships between data are presented in order that these problems become equivalent to the problem where a Dirichlet condition at the fixed face is considered. Moreover, in the case where a Robin condition is prescribed, the limit behaviour is studied when the heat transfer coefficient at the fixed face goes to infinity. 
}

\begin{keywords}
Stefan problem, Threshold gradient, Variable latent heat,  One-dimensional consolidation, Explicit solution, Similarity solution.
\end{keywords}
\begin{mathsubclass}
35C05, 35R35, 80A22.
\end{mathsubclass}

\section{Introduction}

Stefan-like problems  have attracted growing attention in the last decades due to the fact that they  arise in many significant areas of engineering, geoscience and industry \cite{AlSo}-\cite{Ta4}.   The classical Stefan problem  describes the process of a material undergoing a phase change. Finding a solution to this problem consists in solving the heat-conduction equation in an unknown region  which has also to be determined, imposing an initial condition, boundary conditions and the Stefan condition at the moving interface. For an account of the theory we refer the reader to \cite{Ta2}.

In the classical Stefan problem the latent heat is assumed to be constant. In this paper, we are going to consider a variable one. This assumption is motivated by the fact that it becomes meaningful in the study of the shoreline movement  in a sedimentary basis \cite{VSP},  in the one-dimensional consolidation with threshold gradient \cite{ZBL}, in the artificial ground-freezing technique \cite{ZSZ},  in nanoparticle melting \cite{RiMy16}, among others ( \cite{Do14}-\cite{ZhXi} )

Many papers deal with a latent heat that depends on the position of the free boundary (size-dependent latent heat). In  \cite{Pr}, a Stefan problem with a latent heat given as a function of the position of the interface $L=\varphi(s(t))$ has been considered. This hypothesis corresponds to the practical case when the influence of phenomena such as surface tension, pressure gradients and non-homogeneity of materials are taken into account. 
In \cite{VSP},  the shoreline movement in a sedimentary basin was studied, from where arises a one-phase Stefan problem with a latent heat that increases
linearly with distance from the origin i.e. $L=\gamma s(t)$ (with $\gamma$  a given constant). The generalization to the two-phase problem was done in \cite{SaTa}.
Also, in \cite{ZWB} a latent heat  defined as   a power function of the position, i.e. $L=\gamma s^{n}(t)$ (with $\gamma$ a given constant and $n$ an arbitrary non-negative integer) was considered. The extension to a non-integer exponent was done in \cite{ZhXi} for a flux and temperature boundary conditions, while the two-phase case was presented in \cite{ZSZ}. In \cite{BoTa-1},  a convective (Robin) condition  was 
imposed for the one-phase case while for the  two-phase case the analysis was done in \cite{BoTa-2}.

In \cite{ZBL}, a one-dimensional consolidation problem with a threshold gradient was studied. This problem can be reduced to a one-phase Stefan problem where the latent heat can be expressed as $L=\dfrac{\gamma}{\dot{s}(t)}$. That is to say a rate-dependent  latent heat. It must be noticed that the case considered in \cite{ZBL} is not properly a Stefan problem because the velocity of the moving boundary disappears, and it has to be treated as a free boundary problem with implicit conditions \cite{Fa}, \cite{Sc}.

Recently, in \cite{BoTa}  it was  defined a generalized one-phase Stefan-like problem  for a semi-infinite material $x>0$ with a latent heat given by $L=\gamma s^{\beta}(t)\dot{s}^{\delta}(t)$ (with $\gamma$ a given constant and $\beta$ and $\delta$ arbitrary real constants), i.e., a latent heat depending on the position and velocity of the moving boundary, taking a Dirichlet boundary condition. This paper  intends to complete this model, by considering two new boundary conditions (Neumann and Robin conditions) at the fixed face $x=0$. 

In Section 2 we present a problem $(P)$ with a variable latent heat and a generalized boundary condition at the fixed face. We will obtain its exact solution, following the methodology given in \cite{ZBL}, \cite{ZhXi}, and \cite{BoTa}, obtaining as immediate consequence the similarity solutions to two different  problems: one with Neumann condition at $x=0$ and the other with a Robin one.  Special cases will be treated in order to recover solutions recently reported in literature.    Moreover, in Section 3, the equivalence between these  problems and the problem with a Dirichlet condition considered in \cite{BoTa} will be proved under certain relationships between data. For the problem with a Robin boundary condition at the fixed face,  the limit behaviour when the heat transfer  coefficient  goes to infinity will  be also  analysed in Section 4. This analysis will allow us to show that the Robin condition constitutes a generalization of the Dirichlet one, as  happens in classical heat transfer problems \cite{Ta17}.
Also, in Section 5, we will provide some plots and table of values in order to track the position of the free front and to show how the latent heat changes in time.

\section{Formulation of the problems and exact solution}

\subsection{Statement of the problems}
In this paper, the exact solution of two different free boundary problems are obtained. They will be defined as particular cases of the following problem $(P)$ that consists in  finding the function $u=u(x,t)$ and the moving boundary $x=s(t)$ such that:
\begin{align}
&\frac{\partial u}{\partial t}(x,t)=a^2 \frac{\partial^2 u}{\partial x^2}(x,t), & 0<x<s(t), \label{EcCalor} \\
&u(s(t),t)=0, & t>0, \label{TempCambioFase}\\ 
&-k\frac{\partial u}{\partial x}(s(t),t)= L(s(t),\dot{s}(t))\dot{s}(t), &t>0, \label{CondStefan}\\
&s(0)=0, &\label{CondInicialFrontera}\\
& k \frac{\partial u}{\partial x}(0,t) =\frac{h_0}{\sqrt{t}}\left[\lambda u(0,t)-u_{\infty}t^{\tfrac{\alpha}{2}} \right],&\qquad t>0,  \label{CondConvFronteraG}
\end{align}
where $\alpha$, $\lambda$, $h_0$ , $a^2$  (diffusivity) and  $k$ (conductivity) are non-negative constants.

The problem defined by specifying $\lambda=0$ in $(P)$, will be referred to as \textbf{problem} ($P_N$). In this case, condition (\ref{CondConvFronteraG}) corresponds to the Neumann boundary  condition:
\begin{equation}
 k \frac{\partial u}{\partial x}(0,t) =-q_0 t^{\tfrac{\alpha-1}{2}},\qquad   t>0 , \quad (q_0>0) \label{CondFlujoFrontera}
\end{equation}
where a time dependent heat flux characterized by $q_0=h_0u_{\infty}>0$ is applied at the fixed face $x=0$. This flux is  proportional to the power $\tfrac{\alpha-1}{2}$ of time.

The problem defined by specifying $\lambda=1$ in $(P)$ will be referred to as \textbf{problem} ($P_R$). In this case, condition (\ref{CondConvFronteraG}) corresponds to the Robin boundary  condition:
\begin{equation}
 k \frac{\partial u}{\partial x}(0,t) =\frac{h_0}{\sqrt{t}}\left[ u(0,t)-u_{\infty}t^{\tfrac{\alpha}{2}} \right],\qquad t>0, \quad (h_0>0) \label{CondConvFrontera}
\end{equation}
where $u_{\infty}$ characterizes the bulk temperature at a large distance from the fixed face $x = 0$ and $h_0>0$ characterizes the heat transfer at the fixed face.

Comparing these problems with respect to the classical Stefan problem, the new feature to be observed is that the condition (\ref{CondStefan}) at the free interface can be thought of as a generalized Stefan condition where the latent heat term $L(s(t),\dot{s}(t))$ is not constant but rather a function of position and velocity of the moving boundary. Furthermore, in order to obtain a similarity type solution for problems $(P)$, $(P_N)$ and $(P_R)$, $L$ will be specified as:
\begin{equation}
L(s(t),\dot{s}(t))= \gamma s^{\beta}(t)\dot{s}^{\delta}(t),
\end{equation}
with $\gamma$, $\beta$ and $\delta$ non-negative given constants.

\subsection{Similarity-type solutions}

Before finding the similarity type solution to problem $(P)$,  the subsequent analysis will be necessary.

Let us observe that if we use the following similarity transformation presented in \cite{{ZWB}} and \cite{ZhXi}:
\begin{equation}
u(x,t)=t^{\tfrac{\alpha}{2}} \varphi(\eta) \qquad \text{ with }\qquad  \eta=\frac{x}{2a\sqrt{t}},\label{Transformacion}
\end{equation}
then it is obtained that
$$\frac{\partial^2 u}{\partial x^2}(x,t)=t^{\tfrac{\alpha}{2}-1} \varphi''(\eta) \frac{1}{4a^2}\qquad \text{and} \qquad \frac{\partial u}{\partial t}(x,t)=t^{\tfrac{\alpha}{2}-1} \left[ \frac{\alpha}{2}\varphi(\eta)-\eta \varphi''(\eta)\right].$$
Therefore, equation (\ref{EcCalor}) is satisfied, i.e. $\tfrac{\partial u}{\partial t}(x,t)=a^2 \tfrac{\partial^2 u}{\partial x^2}(x,t)$ if and only if
\begin{equation}
\varphi''(\eta)+2\eta \varphi'(\eta)-2\alpha \varphi(\eta)=0. \label{EcKummer}
\end{equation}
This second order ordinary differential equation, known in literature as Kummer's differential equation (see \cite{OLBC}), has a  general solution that is given by
\begin{equation}
\varphi(\eta)=t^{\tfrac{\alpha}{2}}\left[C_1 M\left(-\tfrac{\alpha}{2},\tfrac{1}{2},-\eta^2 \right)+C_2 \eta M\left(-\tfrac{\alpha}{2}+\tfrac{1}{2},\tfrac{3}{2},-\eta^2 \right)  \right], \label{SolKummer}
\end{equation}
with $C_1$ and $C_2$  arbitrarily constants. The function $M(a,b,z)$ is called Kummer's function or confluent hypergeometric function of the first kind and it is defined by the following series
$$M(a,b,z)=\sum\limits_{n=0}^{\infty} \frac{(a)_n}{(b)_n} \frac{z^n}{n!},$$
where $b$ cannot be a non-positive integer, and $(a)_n$ is the Pochhammer symbol defined by
 $$(a)_0=1,\qquad  (a)_n= a (a+1) (a+2)\cdots (a+n-1).$$

The detailed  proof of the fact that the general solution of the Kummer's equation (\ref{EcKummer}) can be written as (\ref{SolKummer}) may be found in \cite{BoTa}.

The main properties of the Kummer's function $M(a,b,z)$ to be used throughout this paper can be found in \cite{OLBC} and they are stated in the following way:
\begin{align}
& M(a,b,0)=1, \label{Kummer0}\\
& M(a,b,z)= e^z M(b-a,b,-z),\label{RelacionExponencial-1} \\
& e^{-z^2}= -2\alpha z^2 M\left(-\tfrac{\alpha}{2}+\tfrac{1}{2},\tfrac{3}{2},-z^2 \right) M\left(-\tfrac{\alpha}{2}+1,\tfrac{3}{2},-z^2 \right) +\nonumber \\
& \qquad+ M\left(-\tfrac{\alpha}{2},\tfrac{1}{2},-z^2 \right) M\left(-\tfrac{\alpha}{2}+\tfrac{1}{2},\tfrac{1}{2},-z^2 \right),\label{RelacionExponencial-2} \\
& \frac{d}{dz} M(a,b,z)=\frac{a}{b} M(a+1,b+1,z), \label{DerivadaKummer-1}\\
& \frac{d}{dz}\left[ z^{b-1} M(a,b,z)\right]= (b-1) z^{b-2} M(a,b-1,z),\label{DerivadaKummer-2}\\
& M(a,b,z)\simeq \frac{e^z z^{a-b}}{\Gamma(a)} \quad \text{ when }\quad  z\to\infty,\label{KummerInf}\\
&M\left(-\dfrac{n}{2},\dfrac{1}{2},-z^2 \right)  =2^{n-1} \Gamma\left(\dfrac{n}{2}+1 \right)\left[i^n erfc(z)+i^nerfc(-z) \right]\label{Prop-3},\quad n\in\mathbb{N},\\
&zM\left(-\dfrac{n}{2}+\dfrac{1}{2},\dfrac{3}{2},-z^2 \right)=2^{n-2}\Gamma\left( \dfrac{n}{2}+\dfrac{1}{2}\right)\left[i^n erfc(-z)-i^n erfc(z) \right] ,\quad n\in\mathbb{N},\label{Prop-4}
\end{align}
where $i^nerfc(\cdot)$ is the repeated integral of the complementary error function defined by
\begingroup
\addtolength{\jot}{0.1em}
\begin{align*}
& i^0 erfc(z)=erfc(z)=1- erf(z), \qquad erf(z)=\dfrac{2}{\sqrt{\pi}}\int_0^z e^{-u^2}du, \\
& i^n erfc(z)=\int\limits_{z}^{+\infty} i^{n-1}erfc(t)dt. 
\end{align*}
\endgroup

Now, we will look for the similarity solution to problem ($P$). In order to make the notation clearer, we will refer to the solution of problem ($P$) as the pair $(u(x,t), s(t))$ that satisfies (\ref{EcCalor})-(\ref{CondConvFronteraG}).

According to the previous analysis, $u$ will satisfy equation (\ref{EcCalor}) if it is written as:
\begin{equation}
u(x,t)=t^{\tfrac{\alpha}{2}}\left[C_{1} M\left(-\tfrac{\alpha}{2},\tfrac{1}{2},-\eta^2 \right)+C_{2} \eta M\left(-\tfrac{\alpha}{2}+\tfrac{1}{2},\tfrac{3}{2},-\eta^2 \right)  \right], \label{FormaGeneralu}
\end{equation}
with the similarity variable given by $\eta=\tfrac{x}{2a\sqrt{t}}$ and where $C_{1}$, $C_{2}$  are constants to be determined  so that $u$ satisfies the rest of the conditions.

Observe that from (\ref{TempCambioFase}) it should be that $\varphi$ defined by the transformation (\ref{Transformacion}) has to satisfy  $\varphi\left(\tfrac{s(t)}{2a\sqrt{t}} \right)=0$ for all $t>0$. Therefore the moving boundary must adopt the following form:
\begin{equation}
s(t)=2\xi a \sqrt{t},\label{FormaGenerals}
\end{equation}
where $\xi$ is a positive dimensionless coefficient  to be determined.

Hence, bearing in mind that $u$ is written as (\ref{FormaGeneralu}) and $s$ as (\ref{FormaGenerals}), finding the solution to problem ($P$) consists in determining the coefficients $C_{1}$, $C_{2}$ and $\xi$.

The generalized boundary condition at the fixed face (\ref{CondConvFronteraG}), and properties (\ref{Kummer0}), (\ref{DerivadaKummer-1})-(\ref{DerivadaKummer-2}) imply that:
\begin{equation}
C_{2}=\dfrac{2a h_0}{k} \left(\lambda C_1-u_\infty \right). \label{C2-previa}
\end{equation}

From condition (\ref{TempCambioFase}), it can be deduced after some computations that:
\begin{equation}
C_{1}=\dfrac{u_\infty \xi M\left( -\tfrac{\alpha}{2}+\tfrac{1}{2},\tfrac{3}{2},-\xi^2\right)}{\tfrac{k}{2ah_0} M\left(-\tfrac{\alpha}{2},\tfrac{1}{2},-\xi^2 \right)+\lambda \xi M\left( -\tfrac{\alpha}{2}+\tfrac{1}{2},\tfrac{3}{2},-\xi^2\right)}.\label{C1}
\end{equation}
 Therefore, replacing $C_1$ in (\ref{C2-previa}), we get
\begin{equation}
C_2=\dfrac{-u_\infty  M\left( -\tfrac{\alpha}{2},\tfrac{1}{2},-\xi^2\right)}{\tfrac{k}{2ah_0} M\left(-\tfrac{\alpha}{2},\tfrac{1}{2},-\xi^2 \right)+\lambda \xi M\left( -\tfrac{\alpha}{2}+\tfrac{1}{2},\tfrac{3}{2},-\xi^2\right)}.\label{C2}
\end{equation}
Then, we have obtained  $C_{1}$ and $C_2$ as  functions of $\xi$.

Finally,  the Stefan-type condition given by (\ref{CondStefan})  will give us an equation for $\xi$.

Applying the derivation formulas (\ref{DerivadaKummer-1})-(\ref{DerivadaKummer-2}) we claim that:
\begin{align*}
\frac{\partial u}{\partial x}(s(t),t)&= \frac{t^{\tfrac{\alpha-1}{2}} u_\infty}{2a}\tfrac{ \left[2\alpha \xi^2 M\left( -\tfrac{\alpha}{2}+\tfrac{1}{2},\tfrac{3}{2},-\xi^2\right) M\left( -\tfrac{\alpha}{2}+1,\tfrac{3}{2},-\xi^2\right)-M\left( -\tfrac{\alpha}{2},\tfrac{1}{2},-\xi^2\right)M\left( -\tfrac{\alpha}{2}+\tfrac{1}{2},\tfrac{1}{2},-\xi^2\right)\right]}{ \left[ \tfrac{k}{2ah_0} M\left(-\tfrac{\alpha}{2},\tfrac{1}{2},-\xi^2 \right)+\lambda \xi M\left( -\tfrac{\alpha}{2}+\tfrac{1}{2},\tfrac{3}{2},-\xi^2\right)\right] }.
\end{align*}
Using the relationships  (\ref{RelacionExponencial-1})-(\ref{RelacionExponencial-2}), the partial derivative of $u$ is reduced to 
\begin{equation}\label{Derivadau}
\frac{\partial u}{\partial x}(s(t),t)= \frac{- t^{\tfrac{\alpha-1}{2}}u_\infty}{2a}\frac{1}{ \left[ \tfrac{k}{2ah_0} M\left(\tfrac{\alpha}{2}+\tfrac{1}{2},\tfrac{1}{2},\xi^2 \right)+\lambda \xi M\left( \tfrac{\alpha}{2}+1,\tfrac{3}{2},\xi^2\right)\right]}.
\end{equation}
Replacing (\ref{Derivadau}) in (\ref{CondStefan}) yields to the following equality:
$$ \frac{ k u_\infty t^{\tfrac{\alpha-1}{2}}}{2a \left[ \tfrac{k}{2ah_0} M\left(\tfrac{\alpha}{2}+\tfrac{1}{2},\tfrac{1}{2},\xi^2 \right)+\lambda \xi M\left( \tfrac{\alpha}{2}+1,\tfrac{3}{2},\xi^2\right)\right]}= \gamma 2^{\beta }a^{\beta+\delta+1} t^{\tfrac{\beta-\delta-1}{2}} \xi^{\beta+\delta+1}, $$
which makes sense if and only if $\tfrac{\alpha-1}{2}=\tfrac{\beta-\delta-1}{2}$, due to the fact that nor $\gamma$, $\xi$, or $a$ depends on time. Thus, the similarity solution for problem ($P$) will exist if and only if
\begin{equation}
\alpha=\beta-\delta\geq 0,\label{Restriccion-1}
\end{equation}
and if $\xi$ is a  positive solution of the following equation:
 \begin{equation}
 \dfrac{k u_\infty}{\gamma 2^{\beta+1} a^{\beta+\delta+2}} f_\lambda(z)=z^{\beta+\delta+1},\qquad z>0, \label{EcXi}
 \end{equation}
 with
 \begin{equation}\label{flambda}
 f_\lambda(z)=\frac{1}{  \tfrac{k}{2ah_0} M\left(\tfrac{\alpha}{2}+\tfrac{1}{2},\tfrac{1}{2},z^2 \right)+\lambda z M\left( \tfrac{\alpha}{2}+1,\tfrac{3}{2},z^2\right)   }.
 \end{equation}
 
 The notation $f_\lambda$ is adopted in order to emphasize the dependence
of the solution to problem $(P)$ on $\lambda$, and therefore to obtain easily the solutions to problems $(P_N)$ and  $(P_R)$.

 The task is now to prove the existence and uniqueness of solution to equation (\ref{EcXi}). 
From the relationships (\ref{Kummer0}), (\ref{DerivadaKummer-1}),  (\ref{DerivadaKummer-2}) and (\ref{KummerInf}), we obtain that $f_\lambda$  satisfies:
\begin{align}
& f_\lambda'(z)= \frac{-\left[ \tfrac{k}{2ah_0} (\alpha+1)z M\left( \tfrac{\alpha}{2}+\tfrac{3}{2},\tfrac{3}{2},z^2\right)+\lambda M\left( \tfrac{\alpha}{2}+1,\tfrac{1}{2},z^2\right)\right]}{\left[ \tfrac{k}{2ah_0} M\left(\tfrac{\alpha}{2}+\tfrac{1}{2},\tfrac{1}{2},z^2 \right)+\lambda z M\left( \tfrac{\alpha}{2}+1,\tfrac{3}{2},z^2\right)    \right]}     <0, \label{dg}\\
& f_\lambda(0^{+})=\dfrac{2a h_0}{k}>0,\\
& f_\lambda(+\infty)=0.
\end{align}

We can deduce that the l.h.s. of equation (\ref{EcXi}) is a strictly decreasing function that goes from $\tfrac{u_\infty h_0}{\gamma 2^{\beta} a^{\beta+\delta+1}}>0$ to $0$ when $z$ increases from 0 to $+\infty$, while the r.h.s. of equation  (\ref{EcXi}), if $\beta+\delta+1>0$, is a strictly increasing function that goes from 0 to $+\infty$.

As a conclusion we obtain that if $\beta+\delta+1>0$, we can ensure that (\ref{EcXi}) has a unique positive solution.

It should be mentioned that due to the restrictions (\ref{Restriccion-1}), i.e. $\alpha=\beta-\delta\geq 0$ and  $\beta+\delta+1>0$ we get that: $\beta\geq \max(\delta,-1-\delta)$.

All the above analysis can be summarized in the following theorem:

\begin{teo}\label{teo: P}
Let $\beta$ and $\delta$ be arbitrary real constants satisfying $\beta\geq \max(\delta,-1-\delta)$. Taking $\alpha=\beta-\delta$, there exists a unique solution $(u,s )$  of a similarity type for problem ($P$), i.e. equations (\ref{EcCalor})-(\ref{CondConvFronteraG}),
 which is given  by (\ref{FormaGeneralu}) and (\ref{FormaGenerals}), where $C_{1}$ and $C_{2}$ are given by the formulas (\ref{C1}) and (\ref{C2}) respectively,  and the dimensionless coefficient $\xi$ is defined as the unique positive solution of the equation (\ref{EcXi}).
\end{teo}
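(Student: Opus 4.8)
The plan is to prove Theorem \ref{teo: P} by the standard reduction of the free boundary problem to a single scalar transcendental equation for $\xi$, followed by a monotonicity argument for existence and uniqueness of its root. First I would invoke the similarity ansatz (\ref{Transformacion}): writing $u(x,t)=t^{\alpha/2}\varphi(\eta)$ with $\eta=x/(2a\sqrt{t})$ converts the heat equation (\ref{EcCalor}) into Kummer's equation (\ref{EcKummer}), whose general solution is (\ref{SolKummer}); hence every similarity solution must have the form (\ref{FormaGeneralu}). The phase–change condition (\ref{TempCambioFase}) forces $\varphi$ to vanish at $s(t)/(2a\sqrt{t})$ for all $t>0$, which is compatible with a nontrivial profile only if that argument is independent of $t$, so that $s(t)=2\xi a\sqrt{t}$ as in (\ref{FormaGenerals}) for some $\xi>0$. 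At this stage the problem has been reduced to determining the three constants $C_1$, $C_2$, $\xi$ from the three remaining conditions.

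Next I would eliminate $C_1$ and $C_2$ in favour of $\xi$. The fixed–face condition (\ref{CondConvFronteraG}), combined with the value and derivative formulas (\ref{Kummer0}) and (\ref{DerivadaKummer-1})--(\ref{DerivadaKummer-2}), gives the linear relation (\ref{C2-previa}) between $C_2$ and $C_1$; imposing (\ref{TempCambioFase}) at $\eta=\xi$ then solves for $C_1$, and hence $C_2$, explicitly as functions of $\xi$, yielding (\ref{C1}) and (\ref{C2}). It remains only to enforce the Stefan condition (\ref{CondStefan}). Using the derivative formulas together with the Kummer identities (\ref{RelacionExponencial-1})--(\ref{RelacionExponencial-2}) to reduce $\partial u/\partial x$ at the interface to the closed form (\ref{Derivadau}), and substituting the prescribed latent heat $L=\gamma s^{\beta}\dot s^{\delta}$ with $s(t)=2\xi a\sqrt{t}$, produces an identity whose two sides carry the time factors $t^{(\alpha-1)/2}$ and $t^{(\beta-\delta-1)/2}$. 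Since $\gamma$, $a$, $\xi$ are time–independent, this can hold for all $t>0$ only when the exponents agree, which is exactly the compatibility relation (\ref{Restriccion-1}), namely $\alpha=\beta-\delta$; this is where the hypothesis $\alpha=\beta-\delta\ge 0$ enters. Cancelling the common power of $t$ leaves the scalar equation (\ref{EcXi}) for $\xi$, with $f_\lambda$ as in (\ref{flambda}).

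The final and principal step is existence and uniqueness of a positive root of (\ref{EcXi}), which I would obtain by analysing the two sides separately. For the left–hand side it suffices to show that $f_\lambda$ is strictly decreasing with the stated endpoint values. Differentiating $f_\lambda=1/D$ via (\ref{Kummer0}) and (\ref{DerivadaKummer-1})--(\ref{DerivadaKummer-2}) gives the expression (\ref{dg}); since $\alpha\ge 0$, every Kummer factor occurring there is a series with positive terms evaluated at the positive argument $z^2>0$, and $\lambda\ge 0$, so both the numerator and denominator are positive and thus $f_\lambda'(z)<0$. The limits $f_\lambda(0^+)=2ah_0/k>0$ and, through the asymptotics (\ref{KummerInf}), $f_\lambda(+\infty)=0$ then show that the left–hand side of (\ref{EcXi}) decreases strictly from the positive constant $u_\infty h_0/(\gamma 2^{\beta}a^{\beta+\delta+1})$ to $0$. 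The right–hand side $z^{\beta+\delta+1}$ increases strictly from $0$ to $+\infty$, which requires precisely the second part of the hypothesis, $\beta+\delta+1>0$ (equivalently $\beta\ge -1-\delta$). Two continuous curves with these opposite monotonicities meet exactly once, so there is a unique positive $\xi$, and together with (\ref{C1})--(\ref{C2}) this produces the claimed unique similarity solution. The only genuinely delicate points are the sign analysis of $f_\lambda'$ in (\ref{dg}) and the asymptotic evaluation of $f_\lambda$, both of which rest entirely on the Kummer identities (\ref{Kummer0})--(\ref{KummerInf}); everything else is bookkeeping.
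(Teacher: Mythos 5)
Your proposal is correct and follows essentially the same route as the paper: similarity reduction to Kummer's equation, determination of $C_1$, $C_2$ from conditions (\ref{CondConvFronteraG}) and (\ref{TempCambioFase}), derivation of the compatibility relation $\alpha=\beta-\delta$ and the scalar equation (\ref{EcXi}) from the Stefan condition, and the opposite-monotonicity argument for the unique positive root using (\ref{dg}), the endpoint values of $f_\lambda$, and the condition $\beta+\delta+1>0$. No substantive differences from the paper's own proof.
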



The solutions to problems $(P_N)$ and $(P_R)$ can be obtained as a consequence of Theorem \ref{teo: P}, by fixing $\lambda=0$ or $\lambda=1$, respectively.
As an immediate consequence  we have the following results.

\begin{coro}{\sc\textbf{ (case $\lambda=0$)}}\label{teo: Flujo}
Let $\beta$ and $\delta$ be arbitrary real constants satisfying \mbox{$\beta\geq \max(\delta,-1-\delta)$}. Taking $\alpha=\beta-\delta$, there exists a unique solution $(u_N,s_N )$  of a similarity type for\textbf{ problem ($P_N$)}, i.e. equations (\ref{EcCalor})-(\ref{CondInicialFrontera}), and (\ref{CondFlujoFrontera}) which is given by
\begin{align}
&u_N(x,t)=t^{\tfrac{\alpha}{2}}\left[C_{1N} M\left(-\tfrac{\alpha}{2},\tfrac{1}{2},-\eta^2 \right)+C_{2N} \eta M\left(-\tfrac{\alpha}{2}+\tfrac{1}{2},\tfrac{3}{2},-\eta^2 \right)  \right], \label{FormaGeneralu-flujo}\\
&s_N(t)=2\xi_N a \sqrt{t},\label{FormaGenerals-flujo}
\end{align}
where $\eta=\tfrac{x}{2a\sqrt{t}}$ is the similarity variable. The coefficients $C_{1N}$, $C_{2N}$ are defined by
$$C_{1N}= \frac{2 a q_0 \xi_N}{k} \frac{M\left( -\tfrac{\alpha}{2}+\tfrac{1}{2},\tfrac{3}{2},-\xi_N^2\right)}{M\left( -\tfrac{\alpha}{2},\tfrac{1}{2},-\xi_N^2\right)},\qquad \qquad C_{2N}=\dfrac{-2aq_0}{k}, $$
and $\xi_N$ is the unique positive solution of the equation $\tfrac{ku_\infty}{\gamma 2^{\beta+1}a^{\beta+\delta+2}}f_0(z)=z^{\beta+\delta+1}$, that can be rewritten as 
 \begin{equation}
 \frac{q_0}{\gamma 2^{\beta} a^{\beta+\delta+1}} g(z)=z^{\beta+\delta+1}, \qquad z>0, \label{EcxiFlujo}
 \end{equation}
 with
 \begin{equation}\label{f0}
 g(z)=\frac{1}{M\left( \tfrac{\alpha}{2}+\tfrac{1}{2},\tfrac{1}{2},z^2\right)}.
 \end{equation}
\end{coro}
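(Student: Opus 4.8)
The plan is to derive this corollary directly from Theorem~\ref{teo: P} by specializing the parameter $\lambda$ to the value $0$, which is precisely the value that turns the generalized face condition (\ref{CondConvFronteraG}) into the Neumann condition (\ref{CondFlujoFrontera}). First I would observe that setting $\lambda=0$ in (\ref{CondConvFronteraG}) and writing $q_0=h_0 u_\infty$ reproduces exactly (\ref{CondFlujoFrontera}); consequently a similarity solution of $(P)$ with $\lambda=0$ is automatically a solution of $(P_N)$ and vice versa. Since the hypotheses $\beta\geq\max(\delta,-1-\delta)$ and the relation $\alpha=\beta-\delta$ coincide in both statements, Theorem~\ref{teo: P} already supplies existence and uniqueness of the pair $(u_N,s_N)$; it therefore only remains to substitute $\lambda=0$ into the closed-form expressions (\ref{C1}), (\ref{C2}), (\ref{flambda}) and (\ref{EcXi}) and to simplify.

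Next I would evaluate the constants. Putting $\lambda=0$ in (\ref{C1}) annihilates the second summand of the denominator, leaving $C_{1N}=\tfrac{2ah_0 u_\infty \xi_N}{k}\,M\!\left(-\tfrac{\alpha}{2}+\tfrac12,\tfrac32,-\xi_N^2\right)/M\!\left(-\tfrac{\alpha}{2},\tfrac12,-\xi_N^2\right)$, and replacing $h_0 u_\infty$ by $q_0$ gives precisely the stated expression for $C_{1N}$. In the same way, setting $\lambda=0$ in (\ref{C2}) cancels the common Kummer factor in numerator and denominator, producing $C_{2N}=-\tfrac{2ah_0 u_\infty}{k}=-\tfrac{2aq_0}{k}$, as claimed.

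Finally I would reduce the transcendental equation for the free-boundary coefficient. From (\ref{flambda}) with $\lambda=0$ one reads off $f_0(z)=\tfrac{2ah_0}{k}\,g(z)$ with $g$ defined in (\ref{f0}); inserting this into (\ref{EcXi}) and again using $q_0=h_0 u_\infty$ collapses the constant prefactor and delivers equation (\ref{EcxiFlujo}). The existence and uniqueness of the positive root $\xi_N$ is inherited verbatim from the analysis carried out just before Theorem~\ref{teo: P}, namely the strict monotonicity of $f_0$ recorded in (\ref{dg}), its boundary values $f_0(0^+)>0$ and $f_0(+\infty)=0$, and the strict increase of $z\mapsto z^{\beta+\delta+1}$ under $\beta+\delta+1>0$; no new argument is required.

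I do not expect a genuine obstacle here: the entire content of the corollary is the remark that $\lambda=0$ specializes the Robin-type condition to the Neumann one, combined with the routine algebraic simplification of three explicit formulas. The only point demanding mild care is the bookkeeping of the constant $q_0=h_0 u_\infty$, so that the prefactors appearing in $C_{1N}$, $C_{2N}$ and in (\ref{EcxiFlujo}) emerge in the normalized form stated in the corollary.
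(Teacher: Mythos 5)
Your proposal is correct and follows exactly the route the paper intends: the paper presents this corollary as an immediate consequence of Theorem~\ref{teo: P} obtained by setting $\lambda=0$, and your substitutions into (\ref{C1}), (\ref{C2}), (\ref{flambda}) and (\ref{EcXi}), together with the identification $q_0=h_0u_\infty$, reproduce the stated formulas and the reduced equation (\ref{EcxiFlujo}) without any gap.
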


\begin{coro}{\sc\textbf{(case $\lambda=1$)} }\label{teo: Conv}
Let $\beta$ and $\delta$ be arbitrary real constants satisfying \mbox{$\beta\geq \max(\delta,-1-\delta)$}. Taking $\alpha=\beta-\delta$, there exists a unique solution $(u_R,s_R )$  of a similarity type for\textbf{ problem ($P_R$)}, i.e. equations (\ref{EcCalor})-(\ref{CondInicialFrontera}) and (\ref{CondConvFrontera}),
 which is given  by 
 \begin{align}
&u_R(x,t)=t^{\tfrac{\alpha}{2}}\left[C_{1R} M\left(-\tfrac{\alpha}{2},\tfrac{1}{2},-\eta^2 \right)+C_{2R} \eta M\left(-\tfrac{\alpha}{2}+\tfrac{1}{2},\tfrac{3}{2},-\eta^2 \right)  \right], \label{FormaGeneralu-Conv}\\
&s_R(t)=2\xi_R a \sqrt{t},\label{FormaGenerals-Conv}
\end{align}
where $\eta=\tfrac{x}{2a\sqrt{t}}$ is the similarity variable. The coefficients $C_{1R}$, $C_{2R}$ are defined by
$$C_{1R}=\tfrac{u_\infty \xi_R M\left( -\tfrac{\alpha}{2}+\tfrac{1}{2},\tfrac{3}{2},-\xi_R^2\right)}{\tfrac{k}{2ah_0} M\left(-\tfrac{\alpha}{2},\tfrac{1}{2},-\xi_R^2 \right)+ \xi_R M\left( -\tfrac{\alpha}{2}+\tfrac{1}{2},\tfrac{3}{2},-\xi_R^2\right)},\qquad C_{2R}=\tfrac{-u_\infty  M\left( -\tfrac{\alpha}{2},\tfrac{1}{2},-\xi_R^2\right)}{\tfrac{k}{2ah_0} M\left(-\tfrac{\alpha}{2},\tfrac{1}{2},-\xi_R^2 \right)+ \xi_R M\left( -\tfrac{\alpha}{2}+\tfrac{1}{2},\tfrac{3}{2},-\xi_R^2\right)}, $$
and $\xi_R$ is the unique positive solution of the following equation
 \begin{equation}
\frac{ku_\infty}{\gamma 2^{\beta+1}a^{\beta+\delta+2}}f_1(z)=z^{\beta+\delta+1},\qquad z>0,\label{EcXiConv}
 \end{equation}
 with $f_1$ defined by replacing $\lambda=1$ in $f_\lambda$ given in (\ref{flambda}):
 \begin{equation}\label{f1}
f_{1}(z)=\frac{1}{\left[\tfrac{k}{2ah_0}M\left( \tfrac{\alpha}{2}+\tfrac{1}{2},\tfrac{1}{2}, z^2 \right)+z M\left( \tfrac{\alpha}{2}+1,\tfrac{3}{2},z^2\right)\right]}.
 \end{equation}
\end{coro}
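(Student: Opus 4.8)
The plan is to obtain Corollary \ref{teo: Conv} as a direct specialization of Theorem \ref{teo: P} rather than redoing the construction from scratch. The first step is to observe that problem $(P_R)$ is literally problem $(P)$ with the single choice $\lambda=1$: substituting $\lambda=1$ into the generalized fixed-face condition (\ref{CondConvFronteraG}) turns it into the Robin condition (\ref{CondConvFrontera}), while equations (\ref{EcCalor})--(\ref{CondInicialFrontera}) are common to both formulations. Since the hypotheses of Theorem \ref{teo: P}, namely $\beta\geq\max(\delta,-1-\delta)$ together with the compatibility relation $\alpha=\beta-\delta$ coming from (\ref{Restriccion-1}), are exactly the hypotheses assumed here, Theorem \ref{teo: P} immediately yields a unique similarity solution $(u_R,s_R)$ of the stated form (\ref{FormaGeneralu-Conv})--(\ref{FormaGenerals-Conv}).

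The second step is purely a matter of bookkeeping: I would read off the coefficients by setting $\lambda=1$ in the general formulas. Putting $\lambda=1$ in (\ref{C1}) and (\ref{C2}) gives precisely the quoted expressions for $C_{1R}$ and $C_{2R}$, and putting $\lambda=1$ in the auxiliary function $f_\lambda$ of (\ref{flambda}) produces $f_1$ as written in (\ref{f1}). Consequently the transcendental equation (\ref{EcXi}) that determines $\xi$ becomes equation (\ref{EcXiConv}) for $\xi_R$.

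The third step is to confirm the existence and uniqueness of the positive root $\xi_R$ of (\ref{EcXiConv}). Here I would simply note that the monotonicity analysis carried out for $f_\lambda$ in Theorem \ref{teo: P} --- the sign computation (\ref{dg}) showing $f_\lambda'<0$, together with $f_\lambda(0^{+})=\tfrac{2ah_0}{k}>0$ and $f_\lambda(+\infty)=0$ --- is valid for every admissible $\lambda\geq 0$, in particular for $\lambda=1$. Thus the left-hand side of (\ref{EcXiConv}) is strictly decreasing from a positive value down to $0$, whereas the right-hand side $z^{\beta+\delta+1}$ is strictly increasing from $0$ to $+\infty$ because $\beta+\delta+1>0$; the two curves therefore meet at exactly one point, which is $\xi_R$.

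Since the corollary is a genuine specialization, there is essentially no hard part; the only points demanding care are verifying that the reduction (\ref{CondConvFronteraG})$\to$(\ref{CondConvFrontera}) at $\lambda=1$ is exact and, above all, that nothing in the monotonicity argument of Theorem \ref{teo: P} tacitly required $\lambda=0$. Since (\ref{dg}) treats $\lambda$ as a free non-negative parameter and all Kummer factors appearing there are positive, that argument goes through verbatim at $\lambda=1$, and the proof is complete.
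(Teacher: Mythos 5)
Your proposal is correct and follows exactly the paper's route: the paper obtains Corollary \ref{teo: Conv} as an immediate consequence of Theorem \ref{teo: P} by setting $\lambda=1$ in (\ref{CondConvFronteraG}), (\ref{C1}), (\ref{C2}), (\ref{EcXi}) and (\ref{flambda}). Your additional check that the monotonicity analysis of $f_\lambda$ holds for $\lambda=1$ is a sensible explicit verification of what the paper leaves implicit.
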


Specifying different values for $\beta$ and $\delta$ in the above results, several solutions reported in literature can be recovered as a corollary. For instance:

\begin{coro}
The solution to the classical Stefan problem with a Neumann boundary condition at the fixed face can be recovered from Theorem \ref{teo: P}  by taking $\lambda=0$, $\beta=\delta=0$.
\end{coro}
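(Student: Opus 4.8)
The plan is to specialise the general Neumann result of Corollary \ref{teo: Flujo} --- which is itself Theorem \ref{teo: P} with $\lambda=0$ --- to the values $\beta=\delta=0$, and to check that both the resulting free boundary problem and its explicit solution coincide with the classical one-phase Stefan problem driven by a prescribed heat flux at $x=0$. First I would record the effect of each parameter choice on the data of problem $(P)$. Taking $\beta=\delta=0$ turns the variable latent heat $L(s(t),\dot s(t))=\gamma s^{\beta}(t)\dot s^{\delta}(t)$ into the constant $L=\gamma$, which is exactly the latent heat of the classical model; the compatibility restriction (\ref{Restriccion-1}) then forces $\alpha=\beta-\delta=0$, so the prescribed flux (\ref{CondFlujoFrontera}) reduces to $k\frac{\partial u}{\partial x}(0,t)=-q_0\,t^{-1/2}$ (a Neumann condition), while the Stefan condition (\ref{CondStefan}) becomes the classical $-k\frac{\partial u}{\partial x}(s(t),t)=\gamma\dot s(t)$.

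The next step is to evaluate, at $\alpha=0$, the Kummer functions appearing in (\ref{FormaGeneralu-flujo}) and in the coefficients $C_{1N},C_{2N}$, reducing them to elementary functions. From the series definition one has $M\!\left(0,\tfrac12,-\eta^2\right)=1$, while property (\ref{Prop-4}) (extended to $n=0$, for which $i^0erfc=erfc$ is already defined) gives $\eta\,M\!\left(\tfrac12,\tfrac32,-\eta^2\right)=\tfrac{\sqrt\pi}{2}\,erf(\eta)$. Substituting these into (\ref{FormaGeneralu-flujo}) together with the explicit values of $C_{1N}$ and $C_{2N}$, the temperature collapses to the classical error-function profile
\be
u_N(x,t)=\frac{a\,q_0\sqrt\pi}{k}\left[erf(\xi_N)-erf(\eta)\right],\qquad \eta=\frac{x}{2a\sqrt t},
\ee
which vanishes on the free boundary $\eta=\xi_N$, confirming (\ref{TempCambioFase}).

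It then remains to reduce the transcendental equation (\ref{EcxiFlujo}) for $\xi_N$. With $\beta=\delta=0$ its right-hand side is just $z$, and in $g$ defined by (\ref{f0}) the factor $M\!\left(\tfrac12,\tfrac12,z^2\right)$ simplifies, via the exponential relation (\ref{RelacionExponencial-1}) combined with $M\!\left(0,\tfrac12,-z^2\right)=1$, to $e^{z^2}$; hence $g(z)=e^{-z^2}$ and (\ref{EcxiFlujo}) becomes
\be
q_0\,e^{-\xi_N^2}=\gamma\,a\,\xi_N,
\ee
the familiar transcendental equation of the classical flux-driven Stefan problem. I would close by checking directly that this profile and this equation satisfy $-k\frac{\partial u_N}{\partial x}(s(t),t)=\gamma\dot s_N(t)$, which is immediate once $erf$ is differentiated.

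I do not expect a genuine obstacle: the statement is a consistency check, and every reduction is a direct application of the Kummer identities collected above. The only point requiring mild care is the bookkeeping of the constants $C_{1N},C_{2N}$ and of the $\tfrac{\sqrt\pi}{2}$ factors in passing from Kummer's function to $erf$, so that the classical normalisation of $u_N$ is recovered exactly rather than up to a spurious multiplicative constant.
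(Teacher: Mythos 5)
Your proposal is correct and follows essentially the same route as the paper: specialise $\beta=\delta=0$ (hence $\alpha=0$, constant latent heat $L=\gamma$, flux $-q_0/\sqrt{t}$) and use $M\left(\tfrac12,\tfrac12,z^2\right)=e^{z^2}$ to reduce (\ref{EcxiFlujo}) to $\tfrac{q_0}{\gamma a}e^{-z^2}=z$, the classical equation of \cite{Ta81}. The only difference is that you additionally write out the error-function form of $u_N$ and verify the Stefan condition directly, which the paper leaves implicit; these extra reductions are accurate.
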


\noindent Taking $\beta=\delta=0$ and thus $\alpha=0$, the latent heat $L=\gamma$ is assumed to be constant like in the classical Stefan problem. In such case, fixing $\lambda=0$, the flux boundary condition  is given by $k\frac{\partial u}{\partial x}(0,t)=-\frac{q_0}{\sqrt{t}}$. Moreover, the property  $M\left(\frac{1}{2},\frac{1}{2},z \right)=e^z$, allows us to ensure that  $\xi_N$ is the unique solution to the following equation:
$$\frac{q_0}{\gamma a} e^{-z^2}=z,$$
as was obtained in \cite{Ta81}.
\medskip

\begin{coro}
The solutions provided in  \cite{ZWB}, \cite{ZhXi}   can be recovered from Theorem  \ref{teo: P}  by taking $\lambda=0$, $\beta\in\mathbb{R}^+$ and $\delta=0$.
\end{coro}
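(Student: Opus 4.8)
The plan is to specialise Theorem~\ref{teo: P} to the regime $\lambda=0$, $\delta=0$, $\beta\in\mathbb{R}^+$ and to verify that the resulting data and solution coincide with those of \cite{ZWB} and \cite{ZhXi}.

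First I would confirm that the hypotheses of Theorem~\ref{teo: P} are satisfied. With $\delta=0$, the admissibility requirement $\beta\geq\max(\delta,-1-\delta)$ reduces to $\beta\geq\max(0,-1)=0$, which holds for every $\beta\in\mathbb{R}^+$; the compatibility relation (\ref{Restriccion-1}) then gives $\alpha=\beta-\delta=\beta\geq 0$. Under $\delta=0$ the latent heat $L=\gamma s^{\beta}(t)\dot{s}^{\delta}(t)$ collapses to the purely size-dependent law $L=\gamma s^{\beta}(t)$ that is the defining feature of the problems in \cite{ZWB} (integer exponent) and \cite{ZhXi} (real exponent), and the choice $\lambda=0$ turns the generalised condition (\ref{CondConvFronteraG}) into the Neumann condition (\ref{CondFlujoFrontera}). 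Hence the specialised problem is exactly problem $(P_N)$ with $\delta=0$.

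Next I would read off the solution directly from Corollary~\ref{teo: Flujo}: the pair $(u_N,s_N)$ is given by (\ref{FormaGeneralu-flujo})-(\ref{FormaGenerals-flujo}) with $\alpha=\beta$, with the closed-form constants $C_{1N},C_{2N}$ stated there, and with $\xi_N$ the unique positive root of (\ref{EcxiFlujo}) for $g$ as in (\ref{f0}). Existence and uniqueness of $\xi_N$ are already guaranteed by the monotonicity argument that precedes Theorem~\ref{teo: P}, so no new analysis is needed at this point.

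The only genuine work, and the step I expect to be the main obstacle, is the reconciliation of notation, since the solutions of \cite{ZWB} and \cite{ZhXi} are expressed through repeated integrals of the complementary error function rather than Kummer's function. For a real exponent $\beta$ this is immediate, because \cite{ZhXi} already works in the confluent hypergeometric representation and the two solutions agree after a renaming of constants. For an integer exponent $\beta=n$ I would invoke the identities (\ref{Prop-3})-(\ref{Prop-4}), which rewrite $M\!\left(-\tfrac{n}{2},\tfrac{1}{2},-z^2\right)$ and $zM\!\left(-\tfrac{n}{2}+\tfrac{1}{2},\tfrac{3}{2},-z^2\right)$ in terms of $i^{n}erfc(\pm z)$; substituting these into (\ref{FormaGeneralu-flujo}) and into the transcendental equation (\ref{EcxiFlujo}) reproduces the $i^{n}erfc$ expressions of \cite{ZWB} together with the same defining equation for the similarity coefficient, which completes the identification.
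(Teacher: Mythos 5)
Your proposal is correct and follows essentially the same route as the paper: specialise to $\lambda=0$, $\delta=0$ (so $\alpha=\beta$ and $L=\gamma s^{\beta}(t)$), read the solution off Corollary \ref{teo: Flujo}, and for integer $\beta$ convert the Kummer functions to repeated integrals of the complementary error function via (\ref{Prop-3})--(\ref{Prop-4}). Your write-up is in fact more explicit than the paper's brief remark, but there is no substantive difference in approach.
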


\noindent Taking $\delta=0$, we get that $L=\gamma s^{\beta}(t)$, i.e.  a power function of the position.  For such a case, by taking into account that $\alpha=\beta$ we automatically obtain the solutions already presented in literature. It must be pointed out that if $\beta$ is an integer, properties (\ref{Prop-3})-(\ref{Prop-4}) should be applied.

\begin{obs}
It must be noticed that in case we want to recover a latent heat defined as  $L=\frac{\gamma}{\dot{s}(t)}$, we have to set $\beta=0$, $\delta=-1$, and thus $\alpha=1$. However we can not recover the solution given in \cite{ZBL}, due to the fact that the boundary condition imposed at the fixed face (Dirichlet) does not agree with the boundary condition considered  in problem  $(P)$.
\end{obs}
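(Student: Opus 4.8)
The plan is to split the verification of the Remark into two logically independent parts: a direct exponent-matching computation that pins down the parameters, and a structural comparison of the fixed-face boundary conditions that explains why the recovery nevertheless fails. The first part is purely algebraic. I would match the latent heat prescribed in problem $(P)$, namely $L(s(t),\dot{s}(t))=\gamma\, s^{\beta}(t)\,\dot{s}^{\delta}(t)$, against the target $L=\tfrac{\gamma}{\dot s(t)}=\gamma\, s^{0}(t)\,\dot{s}^{-1}(t)$. Since $\gamma$ is a fixed constant and, for a generic moving front, $s(t)$ and $\dot s(t)$ vary independently, equating these two expressions as identities forces $\beta=0$ and $\delta=-1$. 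Substituting into the compatibility relation (\ref{Restriccion-1}), that is $\alpha=\beta-\delta$, gives $\alpha=0-(-1)=1$, which is the first assertion. I would then record that, with $(\beta,\delta)=(0,-1)$, one has $\beta+\delta+1=0$ and $\max(\delta,-1-\delta)=\max(-1,0)=0=\beta$, so the triple $(\alpha,\beta,\delta)=(1,0,-1)$ lies exactly on the boundary of the admissible region of Theorem \ref{teo: P}; this already signals that the case is borderline for the existence/uniqueness machinery, quite apart from the boundary-condition issue.

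The substantive point is the obstruction to recovering the solution of \cite{ZBL}, which I would argue structurally rather than by computation. The fixed-face condition (\ref{CondConvFronteraG}) of problem $(P)$ always prescribes the flux $k\,\tfrac{\partial u}{\partial x}(0,t)$ as a given affine functional of the trace $u(0,t)$; taking $\lambda=0$ yields a pure Neumann (flux) condition and $\lambda=1$ a Robin condition, but in every instance with $h_0<\infty$ the value $u(0,t)$ is left free and is merely coupled to the flux. By contrast, the problem analysed in \cite{ZBL} imposes a Dirichlet condition, prescribing $u(0,t)$ directly and leaving the flux $k\,\tfrac{\partial u}{\partial x}(0,t)$ to be determined by the solution. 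The two boundary operators are of genuinely different type, and no finite choice of $(\lambda,h_0,u_\infty)$ turns (\ref{CondConvFronteraG}) into a prescription of $u(0,t)$ alone. Hence the boundary data of \cite{ZBL} cannot be reproduced within the family $(P)$, and its solution is not obtained as a special case of Theorem \ref{teo: P} or of Corollaries \ref{teo: Flujo} and \ref{teo: Conv}.

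I expect the main obstacle to be stating this distinction precisely without overstating it, since the natural objection is that a Dirichlet condition is the $h_0\to\infty$ limit of (\ref{CondConvFronteraG}) — exactly the passage carried out in Section 4. The correct and sharp formulation is therefore that recovery fails for every admissible \emph{finite} $h_0$, the Dirichlet case being attained only in that limit; I would phrase the conclusion accordingly. As a complementary observation that reinforces the Remark (though not strictly required for the stated claim), I would note that for $(\beta,\delta)=(0,-1)$ the Stefan-type condition (\ref{CondStefan}) degenerates, because $L\,\dot s=\gamma$ eliminates $\dot s$ and reduces (\ref{CondStefan}) to the constant-flux condition $-k\,\tfrac{\partial u}{\partial x}(s(t),t)=\gamma$; this is precisely the feature remarked upon in the Introduction that renders the problem of \cite{ZBL} a free boundary problem with an implicit condition rather than a genuine Stefan problem, giving a second, independent reason why its solution lies outside the scope of Theorem \ref{teo: P}.
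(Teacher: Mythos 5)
Your proposal is correct and takes essentially the same route as the paper: the exponent matching $\gamma s^{\beta}\dot{s}^{\delta}=\gamma\dot{s}^{-1}$ forces $\beta=0$, $\delta=-1$, hence $\alpha=\beta-\delta=1$, and the obstruction to recovering \cite{ZBL} is precisely the paper's own point, which you formalize correctly, that the Dirichlet condition is not of the form (\ref{CondConvFronteraG}) for any finite choice of $\lambda$, $h_0$, $u_\infty$, the Dirichlet case arising only in the limit $h_0\to\infty$ of Section 4. Your two supplementary observations are also accurate and consistent with the paper's introduction, namely that $\beta+\delta+1=0$ puts this case exactly on the boundary where the monotonicity argument establishing uniqueness for equation (\ref{EcXi}) degenerates (the right-hand side $z^{\beta+\delta+1}$ becomes constant), and that with $(\beta,\delta)=(0,-1)$ the condition (\ref{CondStefan}) reduces to $-k\,\frac{\partial u}{\partial x}(s(t),t)=\gamma$, eliminating $\dot{s}$ and turning the problem into a free boundary problem with an implicit condition rather than a genuine Stefan problem.
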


\begin{coro}
The solution to the classical Stefan problem with a Robin boundary condition at the fixed face can be recovered from Theorem  \ref{teo: P}  by taking $\lambda=1$, $\beta=\delta=0$ (See \cite{Ta17}).
\end{coro}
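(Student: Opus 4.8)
The plan is to specialize Corollary \ref{teo: Conv} --- which already treats the case $\lambda=1$ (problem $(P_R)$) as a consequence of Theorem \ref{teo: P} --- to the parameter choice $\beta=\delta=0$, and then to verify that every ingredient of the resulting similarity solution collapses to the classical error-function form. First I would check admissibility: since $\beta=\delta=0$ gives $\max(\delta,-1-\delta)=\max(0,-1)=0=\beta$, the hypothesis $\beta\geq\max(\delta,-1-\delta)$ holds, and $\alpha=\beta-\delta=0\geq 0$, so Theorem \ref{teo: P} applies. With these values the latent heat specification $L=\gamma s^{\beta}(t)\dot s^{\delta}(t)$ becomes $L=\gamma$, a genuine constant, so the Stefan condition (\ref{CondStefan}) is exactly the classical one; and the Robin condition (\ref{CondConvFrontera}) with $\alpha=0$ reduces to $k\,\frac{\partial u}{\partial x}(0,t)=\frac{h_0}{\sqrt t}\bigl[u(0,t)-u_\infty\bigr]$, the usual self-similar convective condition at the fixed face.

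Next I would simplify the temperature profile $u_R$ in (\ref{FormaGeneralu-Conv}). Setting $\alpha=0$, the two Kummer functions reduce by elementary identities from \cite{OLBC}: the series definition gives $M\left(0,\tfrac12,-\eta^2\right)=1$, while $\eta\,M\left(\tfrac12,\tfrac32,-\eta^2\right)=\tfrac{\sqrt\pi}{2}\,\mathrm{erf}(\eta)$. Hence $u_R(x,t)$ loses its $t^{\alpha/2}$ prefactor and becomes $C_{1R}+C_{2R}\tfrac{\sqrt\pi}{2}\,\mathrm{erf}(\eta)$, i.e. the classical form $A+B\,\mathrm{erf}\!\left(\tfrac{x}{2a\sqrt t}\right)$; the constants $C_{1R},C_{2R}$ specialize through the same identities evaluated at $\eta=\xi_R$, so that $u_R$ vanishes at the free boundary in agreement with (\ref{TempCambioFase}).

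Finally I would reduce the transcendental equation (\ref{EcXiConv}) for $\xi_R$. With $\beta=\delta=\alpha=0$ the right-hand side is simply $z$, and in $f_1$ of (\ref{f1}) I would use $M\left(\tfrac12,\tfrac12,z^2\right)=e^{z^2}$ together with $z\,M\left(1,\tfrac32,z^2\right)=\tfrac{\sqrt\pi}{2}e^{z^2}\mathrm{erf}(z)$, the latter obtained by combining relation (\ref{RelacionExponencial-1}) with the standard identity $z\,M\left(\tfrac12,\tfrac32,-z^2\right)=\tfrac{\sqrt\pi}{2}\mathrm{erf}(z)$. Factoring out $e^{z^2}$ then turns (\ref{EcXiConv}) into
\begin{equation*}
\frac{k u_\infty}{2\gamma a^2}\,\frac{e^{-z^2}}{\tfrac{k}{2ah_0}+\tfrac{\sqrt\pi}{2}\mathrm{erf}(z)}=z,
\end{equation*}
which is precisely the equation governing the coefficient of the free boundary in the classical one-phase Stefan problem with a convective condition, as reported in \cite{Ta17}.

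All the computations are routine; the only point requiring care --- and hence the main (minor) obstacle --- is the consistent use of the confluent hypergeometric identities at $\alpha=0$, so that the elementary and error-function forms emerge and the resulting constants and transcendental equation match \emph{exactly} the normalization used in \cite{Ta17}, rather than merely up to an undetermined multiplicative factor.
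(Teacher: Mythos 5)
Your proposal is correct and follows exactly the route the paper intends: specializing Theorem \ref{teo: P} (equivalently Corollary \ref{teo: Conv}) to $\lambda=1$, $\beta=\delta=0$ and reducing the Kummer functions via $M(0,\tfrac12,-\eta^2)=1$, $M(\tfrac12,\tfrac12,z^2)=e^{z^2}$ and $zM(\tfrac12,\tfrac32,-z^2)=\tfrac{\sqrt\pi}{2}\mathrm{erf}(z)$, which is the same reduction the paper spells out for the Neumann case ($\lambda=0$) and leaves implicit here, merely citing \cite{Ta17}. Your explicit verification, including the admissibility check and the reduced transcendental equation for $\xi_R$, is accurate and fills in the details the paper omits.
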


\begin{coro}
The solution to the  Stefan problem studied in \cite{BoTa-1} can be recovered from Theorem  \ref{teo: P}  by taking $\lambda=1$, $\beta\in \mathbb{R}^{+}$ and $\delta=0$.
\end{coro}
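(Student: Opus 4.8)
The plan is to specialize Corollary \ref{teo: Conv} (the case $\lambda = 1$) to the parameters $\delta = 0$ and $\beta \in \mathbb{R}^{+}$, and then to check that the resulting free boundary problem, together with its similarity solution, coincides with the one-phase Stefan problem with a Robin condition analysed in \cite{BoTa-1}. Since that reference is a particular instance of the Robin problem $(P_R)$ already solved here, the corollary is a pure specialization and the argument is one of verification rather than fresh construction.

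First I would substitute $\delta = 0$ into the prescribed latent heat, obtaining $L(s(t),\dot{s}(t)) = \gamma s^{\beta}(t)$, which is precisely the size-dependent (power-of-position) latent heat of \cite{BoTa-1}; setting $\lambda = 1$ reproduces the Robin boundary condition (\ref{CondConvFrontera}) with $h_0 > 0$, the boundary condition imposed there. I would then verify the compatibility hypothesis of Theorem \ref{teo: P}: since $\delta = 0$ we have $\max(\delta,-1-\delta) = \max(0,-1) = 0$, so the requirement $\beta \geq \max(\delta,-1-\delta)$ reduces to $\beta \geq 0$, which holds for every $\beta \in \mathbb{R}^{+}$. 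Because $\alpha = \beta - \delta = \beta$, the exponent $\tfrac{\alpha}{2}$ in the transformation (\ref{Transformacion}) becomes $\tfrac{\beta}{2}$, so the prescribed bulk temperature scales as $t^{\beta/2}$, matching the time dependence in \cite{BoTa-1}.

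Next I would write out the solution furnished by Corollary \ref{teo: Conv} under these substitutions: with $\alpha = \beta$, the temperature $u_R$ in (\ref{FormaGeneralu-Conv}), the front $s_R$ in (\ref{FormaGenerals-Conv}), the constants $C_{1R}, C_{2R}$, and the transcendental equation (\ref{EcXiConv}) for $\xi_R$ --- with $f_1$ as in (\ref{f1}) --- all become explicit functions of $\beta$ only. It then remains to identify these expressions term-by-term with the corresponding formulas of \cite{BoTa-1}.

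The main obstacle is this last identification, and in particular confirming that equation (\ref{EcXiConv}) coincides with the transcendental equation characterising the front coefficient in \cite{BoTa-1}, since that is where any discrepancy in normalization or representation would surface. If \cite{BoTa-1} records its solution through repeated integrals of the complementary error function rather than through Kummer's function $M(a,b,z)$, I would bridge the two by invoking properties (\ref{Prop-3})--(\ref{Prop-4}), which express $M\left(-\tfrac{n}{2},\tfrac{1}{2},-z^2\right)$ and $z M\left(-\tfrac{n}{2}+\tfrac{1}{2},\tfrac{3}{2},-z^2\right)$ as linear combinations of $i^n erfc(\pm z)$ when $\beta = n \in \mathbb{N}$; for general non-integer $\beta \in \mathbb{R}^{+}$ the confluent hypergeometric form is already the natural one, and the matching is then immediate once the multiplicative constants are reconciled.
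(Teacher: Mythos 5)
Your proposal is correct and follows essentially the same route as the paper, which treats this corollary (like the analogous $\lambda=0$ case for \cite{ZWB}, \cite{ZhXi}) as a direct specialization: set $\delta=0$ so that $L=\gamma s^{\beta}(t)$ and $\alpha=\beta$, check that $\beta\geq\max(\delta,-1-\delta)=0$ holds for $\beta\in\mathbb{R}^{+}$, and observe that the formulas of Corollary \ref{teo: Conv} reduce to those of \cite{BoTa-1}. Your added remarks on verifying the hypothesis and on using properties (\ref{Prop-3})--(\ref{Prop-4}) to reconcile the Kummer-function and $i^{n}erfc$ representations for integer $\beta$ match the paper's own commentary on the parallel Neumann-case corollary.
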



\section{Equivalence to the problem with Dirichlet condition}

In \cite{BoTa}, the unique similarity solution of a problem defined  by equations (\ref{EcCalor})-(\ref{CondInicialFrontera})  with a Dirichlet boundary condition at the fixed face  characterized by $u_0>0$ was obtained, i.e.:
\begin{equation}
u(0,t)=t^{\tfrac{\alpha}{2}}u_{0}>0,\qquad t>0. \label{CondTempFrontera}
\end{equation}
The problem defined by conditions (\ref{EcCalor})-(\ref{CondInicialFrontera}) and (\ref{CondTempFrontera}) will be referred to as \textbf{problem ($P_D$)} and its solution will be referred to as the pair $(u_D,s_D)$.

According to \cite{BoTa}, if $\beta$ and $\delta$ are arbitrary real constants satisfying $\beta\geq \max(\delta,-1-\delta)$, taking $\alpha=\beta-\delta$, the unique solution to problem ($P_D$) is given by:
\begin{align}
& u_D(x,t)=t^{\tfrac{\alpha}{2}}\left[C_{1D} M\left(-\tfrac{\alpha}{2},\tfrac{1}{2},-\eta^2 \right)+C_{2D} \eta M\left(-\tfrac{\alpha}{2}+\tfrac{1}{2},\tfrac{3}{2},-\eta^2 \right)  \right], \label{FormaGeneraluD}\\
& s_D(t)=2\xi_D a \sqrt{t},\label{FormaGeneralsD}
\end{align}
where 
$$C_{1D}=u_0, \qquad \qquad C_{2D}=\frac{-u_0 M\left( -\tfrac{\alpha}{2},\tfrac{1}{2},-\xi_D^2\right)}{\xi_D M\left( -\tfrac{\alpha}{2}+\tfrac{1}{2},\tfrac{3}{2},-\xi_D^2\right)},$$
and $\xi_D$ is the unique positive solution of the following equation
\begin{equation}
\frac{k u_0}{\gamma a^{\beta+\delta+2} 2^{\beta+1}} f(z)=z^{\beta+\delta+1},\qquad \qquad z>0,\label{EcxiTemp}
\end{equation}
with 
\begin{equation}
f(z)=\frac{1}{zM\left( \tfrac{\alpha}{2}+1,\tfrac{3}{2},z^2\right)}.\label{Funcion-f}
\end{equation}

In this section we will study conditions on the data of the problem ($P$)  that guarantee its equivalence with the problem $(P_D)$. For equivalence it will be understood that both problems have the same solution.

Consider the problem ($P$) with a  given    data $\lambda$, $u_\infty$, $h_0$ whose solution $(u,s)$ is given by formulas (\ref{FormaGeneralu}), and (\ref{FormaGenerals}) under the hypothesis  that $\beta$ and $\delta$ are arbitrary real constants with $\beta\geq \max(\delta,-1-\delta)$, and $\alpha=\beta-\delta$. Computing $u(0,t)$ it is obtained that
$$u(0,t)=\dfrac{u_\infty \xi M\left( -\tfrac{\alpha}{2}+\tfrac{1}{2},\tfrac{3}{2},-\xi^2\right)}{\tfrac{k}{2ah_0} M\left(-\tfrac{\alpha}{2},\tfrac{1}{2},-\xi^2 \right)+\lambda \xi M\left( -\tfrac{\alpha}{2}+\tfrac{1}{2},\tfrac{3}{2},-\xi^2\right)}t^{\alpha/2},$$
with $\xi$ defined as the unique positive solution to equation (\ref{EcXi}).
Suppose now that we fix $$u_0=\tfrac{u_\infty \xi M\left( -\tfrac{\alpha}{2}+\tfrac{1}{2},\tfrac{3}{2},-\xi^2\right)}{\tfrac{k}{2ah_0} M\left(-\tfrac{\alpha}{2},\tfrac{1}{2},-\xi^2 \right)+\lambda \xi M\left( -\tfrac{\alpha}{2}+\tfrac{1}{2},\tfrac{3}{2},-\xi^2\right)},$$ 
and we solve the problem ($P_D$) obtaining $(u_D,s_D)$. Notice that  the moving boundary $s_D$ is characterized by a dimensionless coefficient $\xi_D$ that will be the unique solution to equation (\ref{EcxiTemp}), i.e.
\begin{equation}\label{Relacion q0u0-1}
\dfrac{k}{\gamma a^{\beta+\delta+2}2^{\beta+1}}\frac{1}{zM\left( \tfrac{\alpha}{2}+1,\tfrac{3}{2},z^2\right)} \tfrac{u_\infty \xi M\left( -\tfrac{\alpha}{2}+\tfrac{1}{2},\tfrac{3}{2},-\xi^2\right)}{\left[\tfrac{k}{2ah_0} M\left(-\tfrac{\alpha}{2},\tfrac{1}{2},-\xi^2 \right)+\lambda \xi M\left( -\tfrac{\alpha}{2}+\tfrac{1}{2},\tfrac{3}{2},-\xi^2\right)\right]}=z^{\beta+\delta+1}.
\end{equation}

Notice that if we put $z=\xi$, the prior equation reduces to  equation (\ref{EcXi}), meaning that $z=\xi$ constitutes a solution to (\ref{Relacion q0u0-1}). Therefore, as the unique solution to (\ref{Relacion q0u0-1}) is given by $\xi_D$, it results that $\xi_D=\xi$.
Then, it follows easily that $C_{1D}=C_{1}$, $C_{2D}=C_{2}$ obtaining as consequence that the solution $(u_D, s_D)$ with the $u_0$ data given in function of $\lambda,u_\infty,h_0$, coincides with the solution $(u,s)$ of the problem ($P$).

Conversely, consider the problem ($P_D$) with a given data $u_0$ whose solution $(u_D,s_D)$ is given by formulas (\ref{FormaGeneraluD}) and (\ref{FormaGeneralsD}) under the assumption that $\beta$ and $\delta$ are arbitrary real constants, $\beta\geq \max(\delta,-1-\delta)$, and $\alpha=\beta-\delta$. Computing $\frac{\partial u_D}{\partial x}(0,t)$ it is obtained that:
$$\frac{\partial u_D}{\partial x}(0,t)=\frac{-t^{\tfrac{\alpha-1}{2}} u_0 M\left(-\tfrac{\alpha}{2},\tfrac{1}{2}-\xi_D^2\right)       }{2a  \xi_D M\left(-\tfrac{\alpha}{2}+\tfrac{1}{2},\tfrac{3}{2},-\xi_D^2\right)}.$$
Let us consider ($P$) with the data $h_0$ given by
$$h_0=-\dfrac{ k u_0 M\left(-\tfrac{\alpha}{2},\tfrac{1}{2},-\xi_D^2 \right)}{2a\xi_D M\left(-\tfrac{\alpha}{2}+\tfrac{1}{2},\tfrac{3}{2},-\xi_D^2 \right) \left(\lambda u_0-u_{\infty} \right)},$$
fixing $\lambda$ and $u_\infty$ such that $\lambda u_0<u_\infty$. The solution $(u,s)$ of this problem can be obtained by (\ref{FormaGeneralu}) and (\ref{FormaGenerals}). The free boundary  $s$ is characterized by a dimensionless coefficient $\xi$ that is the unique solution of equation (\ref{EcXi}), i.e. satisfies:
\begin{equation}\label{Relacion q0u0-2}
  \dfrac{k u_\infty}{\gamma 2^{\beta+1} a^{\beta+\delta+2}} \frac{1}{\left[  \tfrac{k}{2ah_0} M\left(\tfrac{\alpha}{2}+\tfrac{1}{2},\tfrac{1}{2},z^2 \right)+\lambda z M\left( \tfrac{\alpha}{2}+1,\tfrac{3}{2},z^2\right)  \right] }=z^{\beta+\delta+1},\qquad z>0.
\end{equation}
The prior equation has $z=\xi_D$ as a solution due to the fact that  if we replace $z$ by $\xi_D$, it is obtained that equation (\ref{Relacion q0u0-2}) is equivalent to equation (\ref{EcxiTemp}). As (\ref{Relacion q0u0-2}) has a unique solution given by $\xi$, we claim that $\xi=\xi_D$. In  addition, by some computations, it becomes $C_{1}=C_{1D}$, $C_{2}=C_{2D}$ and so the solution $(u, s)$ to problem ($P$) given by a data $h_0$ in function of $u_0$ is equal to the solution $(u_D,s_D)$ to problem ($P_D$). 
Therefore the following theorem holds

\begin{teo}\label{EquivP} If $\beta$ and $\delta$ are arbitrary real constants satisfying $\beta\geq \max(\delta,-1-\delta)$ and $\alpha=\beta-\delta$, then the problem ($P$) defined by condition (\ref{EcCalor})-(\ref{CondConvFronteraG}) is equivalent  to problem ($P_D$) defined by (\ref{EcCalor})-(\ref{CondInicialFrontera}) and (\ref{CondTempFrontera}), when the parameters $\lambda, u_\infty$ and $h_0$ in the problem ($P$) are related with the parameter $u_0$ in problem ($P_D$) by the following expression: 
\begin{equation}\label{RelacionFlujoTemperatura}
u_0=\tfrac{u_\infty \xi M\left( -\tfrac{\alpha}{2}+\tfrac{1}{2},\tfrac{3}{2},-\xi^2\right)}{\tfrac{k}{2ah_0} M\left(-\tfrac{\alpha}{2},\tfrac{1}{2},-\xi^2 \right)+\lambda \xi M\left( -\tfrac{\alpha}{2}+\tfrac{1}{2},\tfrac{3}{2},-\xi^2\right)}.
\end{equation}
The coefficient $\xi$ makes reference to the unique solution of equation (\ref{EcXi}) for problem ($P$) which will coincide with the unique solution of (\ref{EcxiTemp}) for problem ($P_D$).
\end{teo}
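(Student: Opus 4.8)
The plan is to read ``equivalence'' as the assertion that, under the parameter link \eqref{RelacionFlujoTemperatura}, the solution pair $(u,s)$ produced by Theorem \ref{teo: P} for problem $(P)$ coincides identically with the pair $(u_D,s_D)$ obtained for problem $(P_D)$. Since both solutions have the prescribed similarity form \eqref{FormaGeneralu}--\eqref{FormaGenerals} and \eqref{FormaGeneraluD}--\eqref{FormaGeneralsD}, and each is completely determined by a single dimensionless root ($\xi$ or $\xi_D$) together with constants that are explicit functions of that root, it suffices to show that the two roots agree. Once $\xi=\xi_D$ is established, the coincidence $C_{1}=C_{1D}$ and $C_{2}=C_{2D}$ follows by inspecting the closed forms \eqref{C1}--\eqref{C2} against their $(P_D)$ counterparts, and hence $u\equiv u_D$, $s\equiv s_D$. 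I would prove the equivalence as two implications.

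For the forward implication, I would start from a solution of $(P)$ with data $\lambda,u_\infty,h_0$ and define $u_0$ by evaluating its boundary trace, which is exactly \eqref{RelacionFlujoTemperatura}. Feeding this $u_0$ into problem $(P_D)$ yields a root $\xi_D$ determined by \eqref{EcxiTemp}; substituting the chosen $u_0$ turns that equation into \eqref{Relacion q0u0-1}. The decisive observation is that setting $z=\xi$ collapses \eqref{Relacion q0u0-1} back to \eqref{EcXi}, so $z=\xi$ is a positive root of the equation defining $\xi_D$. By the uniqueness of the positive root guaranteed in Theorem \ref{teo: P} (applied to $(P_D)$, where $\beta+\delta+1>0$ makes the right-hand side strictly increasing and the left-hand side strictly decreasing to $0$), I conclude $\xi_D=\xi$, and the coefficient identities then close this direction.

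For the converse implication, I would begin with a solution of $(P_D)$ with data $u_0$, compute the flux $\partial u_D/\partial x(0,t)$ at the fixed face, and then fix $\lambda,u_\infty$ with $\lambda u_0<u_\infty$ and define $h_0$ so that this same flux is reproduced by condition \eqref{CondConvFronteraG}; the sign constraint $\lambda u_0<u_\infty$ is precisely what keeps the resulting $h_0$ positive, so that $(P)$ is admissible. The $(P)$-root $\xi$ solves \eqref{EcXi}, which after substituting the chosen $h_0$ becomes \eqref{Relacion q0u0-2}. Again the key step is that putting $z=\xi_D$ reduces \eqref{Relacion q0u0-2} to \eqref{EcxiTemp}, so $\xi_D$ solves the equation characterizing $\xi$; uniqueness gives $\xi=\xi_D$, and matching the constants finishes this direction.

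The main obstacle---and really the only non-mechanical point---is verifying the two substitution reductions cleanly, i.e.\ that \eqref{Relacion q0u0-1} at $z=\xi$ and \eqref{Relacion q0u0-2} at $z=\xi_D$ genuinely reproduce \eqref{EcXi} and \eqref{EcxiTemp}. This amounts to checking that the extra Kummer factor introduced by the change of boundary condition cancels against the numerator supplied by the link \eqref{RelacionFlujoTemperatura}, which I would handle using the confluent hypergeometric identities \eqref{RelacionExponencial-1}--\eqref{RelacionExponencial-2} exactly as in the derivation of \eqref{Derivadau}. Beyond this the remaining care is bookkeeping: confirming the admissibility conditions ($h_0>0$ via $\lambda u_0<u_\infty$, and $u_0>0$) and then reading off $C_1=C_{1D}$, $C_2=C_{2D}$ from the explicit formulas, after which $u\equiv u_D$ and $s\equiv s_D$ are immediate.
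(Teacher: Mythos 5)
Your proposal is correct and follows essentially the same route as the paper: both directions hinge on substituting the linked data into the transcendental equation for the other problem's dimensionless root, observing that the known root solves it, and invoking uniqueness of the positive root before matching the coefficients $C_1,C_2$; you even identify the same admissibility condition $\lambda u_0<u_\infty$ for the converse direction. No substantive difference from the paper's argument.
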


As a consequence of the above result, by fixing $\lambda=0$ and $\lambda=1$, respectively we can obtain the following corollaries:

\begin{coro}{\sc\textbf{(case $\lambda=0$)}} If $\beta$ and $\delta$ are arbitrary real constants satisfying $\beta\geq \max(\delta,-1-\delta)$ and $\alpha=\beta-\delta$, then the problem ($P_N$) defined by condition (\ref{EcCalor})-(\ref{CondInicialFrontera}) and (\ref{CondFlujoFrontera}) is equivalent  to problem ($P_D$) defined by (\ref{EcCalor})-(\ref{CondInicialFrontera}) and (\ref{CondTempFrontera}), when the parameter $q_0$ in the problem ($P_N$) is related with the parameter $u_0$ in problem ($P_D$) by the following expression: 
\begin{equation}\label{RelacionFlujoTemperatura}
u_0= \frac{2 a q_0}{k} \xi_N \frac{M\left( -\tfrac{\alpha}{2}+\tfrac{1}{2},\tfrac{3}{2},-\xi_N^2\right)}{M\left(-\tfrac{\alpha}{2},\tfrac{1}{2},-\xi_N^2 \right)}.
\end{equation}
The coefficient $\xi_N$ makes reference to the unique solution of equation (\ref{EcxiFlujo}) for problem ($P_N$) which will coincide with the unique solution of (\ref{EcxiTemp}) for problem ($P_D$).
\end{coro}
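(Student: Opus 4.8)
The plan is to obtain this corollary as the special case $\lambda=0$ of Theorem \ref{EquivP}, so that essentially no new analytical work is needed. Recall that fixing $\lambda=0$ in problem $(P)$ produces precisely problem $(P_N)$, and that in the Neumann condition (\ref{CondFlujoFrontera}) the flux coefficient is $q_0=h_0u_\infty$. Thus the task reduces to checking that the general equivalence relation of Theorem \ref{EquivP} and the defining equation (\ref{EcXi}) for the interface coefficient both collapse, under these two substitutions, onto the corresponding flux-based expressions appearing in the statement.

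First I would set $\lambda=0$ in the equivalence relation of Theorem \ref{EquivP}. The summand $\lambda\,\xi\,M(-\tfrac{\alpha}{2}+\tfrac{1}{2},\tfrac{3}{2},-\xi^2)$ in the denominator then vanishes, leaving
$$u_0=\frac{u_\infty\,\xi\,M\left(-\tfrac{\alpha}{2}+\tfrac{1}{2},\tfrac{3}{2},-\xi^2\right)}{\tfrac{k}{2ah_0}\,M\left(-\tfrac{\alpha}{2},\tfrac{1}{2},-\xi^2\right)}=\frac{2ah_0u_\infty}{k}\,\xi\,\frac{M\left(-\tfrac{\alpha}{2}+\tfrac{1}{2},\tfrac{3}{2},-\xi^2\right)}{M\left(-\tfrac{\alpha}{2},\tfrac{1}{2},-\xi^2\right)}.$$
Replacing $h_0u_\infty$ by $q_0$ reproduces exactly the expression for $u_0$ claimed in the corollary, with $\xi$ to be identified as $\xi_N$.

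Next I would verify, at the level of the interface equation, that this $\xi$ is indeed the coefficient $\xi_N$. Substituting $\lambda=0$ in $f_\lambda$ from (\ref{flambda}) gives $f_0(z)=\tfrac{2ah_0}{k}\,g(z)$ with $g$ as in (\ref{f0}); inserting this into equation (\ref{EcXi}), cancelling the factor $\tfrac{k}{2ah_0}$, and once more writing $q_0=h_0u_\infty$ turns (\ref{EcXi}) into equation (\ref{EcxiFlujo}). Hence the unique positive root supplied by Theorem \ref{teo: P} for $\lambda=0$ is exactly $\xi_N$, and Theorem \ref{EquivP} already guarantees that it coincides with the unique root $\xi_D$ of equation (\ref{EcxiTemp}).

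I anticipate no genuine obstacle here: every step is elementary bookkeeping of constants, while all the substantive content (existence, uniqueness, and the coincidence $\xi=\xi_D$ of the interface coefficients, together with the agreement of the constants $C_1,C_2$) is inherited directly from Theorem \ref{EquivP}. The only point demanding slight care is tracking the normalizing factor $\tfrac{k}{2ah_0}$ consistently, so that the Robin-type scaling employed in Theorem \ref{EquivP} is correctly converted into the purely flux-based scaling used in the corollary.
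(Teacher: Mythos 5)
Your proposal is correct and takes essentially the same route as the paper, which obtains this corollary simply by specializing Theorem \ref{EquivP} to $\lambda=0$ and identifying $q_0=h_0u_\infty$. Your explicit check that $f_0(z)=\tfrac{2ah_0}{k}g(z)$ turns equation (\ref{EcXi}) into (\ref{EcxiFlujo}) is just the bookkeeping the paper leaves implicit.
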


\begin{coro}{\sc\textbf{(case $\lambda=1$)}} If $\beta$ and $\delta$ are arbitrary real constants satisfying $\beta\geq \max(\delta,-1-\delta)$ and $\alpha=\beta-\delta$, then the problem ($P_R$) defined by condition (\ref{EcCalor})-(\ref{CondInicialFrontera}) and (\ref{CondConvFrontera}) is equivalent  to problem ($P_D$) defined by (\ref{EcCalor})-(\ref{CondInicialFrontera}) and (\ref{CondTempFrontera}), when the parameters $h_0$, $u_{\infty}$ in the problem ($P_R$) are related with the parameter $u_0$ in problem ($P_D$) by the following expression: 
\begin{equation}\label{RelacionConvTemperatura}
u_0=\frac{ u_{\infty} \xi_R M\left(-\tfrac{\alpha}{2}+\tfrac{1}{2},\tfrac{3}{2},-\xi_R^2 \right)}{\tfrac{k}{2a h_0} M\left(-\tfrac{\alpha}{2},\tfrac{1}{2},-\xi_R^2 \right)+\xi M\left(-\tfrac{\alpha}{2}+\tfrac{1}{2},-\xi_R^2 \right)}.
\end{equation}
The coefficient $\xi_R$ makes reference to the unique solution of equation (\ref{EcXiConv}) for problem ($P_R$) which will coincide with the unique solution of (\ref{EcxiTemp}) for problem ($P_D$).
\end{coro}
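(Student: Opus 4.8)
The plan is to deduce this corollary directly from Theorem~\ref{EquivP} by specializing the free parameter $\lambda$ to the value $\lambda=1$, which is precisely the value that converts the generalized condition (\ref{CondConvFronteraG}) into the Robin condition (\ref{CondConvFrontera}) and hence turns problem ($P$) into problem ($P_R$). Since Theorem~\ref{EquivP} asserts the equivalence of ($P$) and ($P_D$) for every admissible choice of $\lambda$ under the hypotheses $\beta\geq\max(\delta,-1-\delta)$ and $\alpha=\beta-\delta$, and these are exactly the hypotheses assumed here, the equivalence of ($P_R$) and ($P_D$) follows once we verify that every object appearing in Theorem~\ref{EquivP} reduces to its ($P_R$) counterpart.

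First I would check that the transcendental equation determining the interface coefficient agrees. Setting $\lambda=1$ in $f_\lambda$ of (\ref{flambda}) yields $f_1$ of (\ref{f1}), so equation (\ref{EcXi}) collapses to (\ref{EcXiConv}); hence the coefficient $\xi$ of the general theorem coincides with $\xi_R$, the unique positive root guaranteed by Corollary~\ref{teo: Conv}. Next I would substitute $\lambda=1$ and $\xi=\xi_R$ into the parameter relation of Theorem~\ref{EquivP}: the denominator $\tfrac{k}{2ah_0} M(-\tfrac{\alpha}{2},\tfrac{1}{2},-\xi^2)+\lambda\,\xi M(-\tfrac{\alpha}{2}+\tfrac{1}{2},\tfrac{3}{2},-\xi^2)$ becomes exactly the one appearing in (\ref{RelacionConvTemperatura}), so the relationship between $u_0$ and the pair $(h_0,u_\infty)$ is the stated one.

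Both directions of the equivalence are then inherited from Theorem~\ref{EquivP}: given Robin data $(h_0,u_\infty)$ one defines $u_0$ by (\ref{RelacionConvTemperatura}) and recovers $(u_D,s_D)=(u_R,s_R)$; conversely, given Dirichlet data $u_0$ one recovers a Robin datum $h_0$, where the constraint $\lambda u_0<u_\infty$ of the general argument specializes to the physically natural inequality $u_0<u_\infty$. The only point requiring care is this last inequality: one must ensure that for the prescribed $u_0$ there exist admissible positive $h_0$ and $u_\infty$ with $u_0<u_\infty$ making the denominator sign correct, but this is guaranteed exactly as in the proof of Theorem~\ref{EquivP}. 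I do not expect any genuine obstacle; the entire content is the bookkeeping that the $\lambda=1$ substitution is consistent across (\ref{EcXi}), (\ref{flambda}) and (\ref{RelacionFlujoTemperatura}).
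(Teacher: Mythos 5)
Your proposal is correct and follows exactly the paper's own route: the paper derives this corollary simply by fixing $\lambda=1$ in Theorem \ref{EquivP}, which is precisely the specialization you carry out (with the added, harmless bookkeeping that $f_\lambda$ reduces to $f_1$, equation (\ref{EcXi}) to (\ref{EcXiConv}), and the parameter relation to (\ref{RelacionConvTemperatura})). No further comment is needed.
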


\section{Asymptotic behaviour when the coefficient $h_0\to\infty$}

In this subsection we are going to analyse the behaviour of the problem ($P_R$) when the  coefficient $h_0>0$ which characterizes the heat transfer coefficient   at the fixed face $x=0$  tends to infinity. Due to the fact that the solution of this problem depends on $h_0$, we will rename it. Thus, we will consider $u_{R}(x,t,h_0):=u_R(x,t)$ and $s_{R }(t):=s_R(t,h_0)$ defined by equations (\ref{FormaGeneralu-Conv})-(\ref{FormaGenerals-Conv}).

Let us define the problem ($P_{D\infty}$) defined by  conditions (\ref{EcCalor})-(\ref{CondInicialFrontera}) and the following condition of Dirichlet type at the fixed face $x=0$ given by
\begin{equation}
u(0,t)=t^{\tfrac{\alpha}{2}}u_{\infty}, \label{CondTempInfintaFrontera}
\end{equation}
where $u_{\infty}$ corresponds to the data of the problem ($P_R$). Notice that the solution $(u_{D\infty},s_{D\infty})$ to problem ($P_{D\infty}$) can be obtained from (\ref{FormaGeneraluD}) and (\ref{FormaGeneralsD}) replacing $u_0$ by $u_\infty$. 

Then we are going to state the following result:

\begin{teo}\label{TeoConvergence}
If $\beta$ and $\delta$ are arbitrary real constants satisfying $\beta\geq \max(\delta,-1-\delta)$, and $\alpha=\beta-\delta$
the problem ($P_R$) converges to problem ($P_{D\infty}$) when $h_0$ tends to infinity, i.e.:
\be
\lim\limits_{h_0\rightarrow +\infty} \text{P}_R=\text{P}_{D\infty}. 
\ee
In this context the term ``convergence" means that:

\begin{equation}
\left\lbrace 
\begin{array}{lll}
\lim\limits_{h_0\rightarrow +\infty} \xi_R(h_0)&=& \xi_{D\infty},\\
\lim\limits_{h_0\rightarrow +\infty} s_{R}(t,h_0)&=& s_{D\infty}(t),\qquad \forall t>0, \\
\lim\limits_{h_0\rightarrow +\infty} u_R(x,t,h_0)&=&  u_{D\infty}(x,t),\quad \forall t>0,\;\; x>0.
\end{array}
\right.
\end{equation}

\end{teo}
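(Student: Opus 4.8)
The core of the argument is the first limit, $\lim_{h_0\to+\infty}\xi_R(h_0)=\xi_{D\infty}$, and this is where the main work lies. Recall that $\xi_R(h_0)$ is the unique positive root of
\[
\frac{ku_\infty}{\gamma 2^{\beta+1}a^{\beta+\delta+2}}f_1(z)=z^{\beta+\delta+1},
\]
where $f_1$ is given by (\ref{f1}) and depends on $h_0$ through the factor $\tfrac{k}{2ah_0}$, while $\xi_{D\infty}$ is the unique positive root of (\ref{EcxiTemp}) with $u_0$ replaced by $u_\infty$, i.e.\ of $\tfrac{ku_\infty}{\gamma a^{\beta+\delta+2}2^{\beta+1}}f(z)=z^{\beta+\delta+1}$ with $f$ given by (\ref{Funcion-f}). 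First I would observe that as $h_0\to+\infty$ the term $\tfrac{k}{2ah_0}M\left(\tfrac{\alpha}{2}+\tfrac12,\tfrac12,z^2\right)$ in the denominator of $f_1$ vanishes, so that $f_1(z)$ converges pointwise to
\[
\frac{1}{zM\left(\tfrac{\alpha}{2}+1,\tfrac32,z^2\right)}=f(z),
\]
and the defining equation for $\xi_R(h_0)$ converges to the defining equation for $\xi_{D\infty}$. To turn this pointwise convergence of the equations into convergence of the roots, I would define $F(z,h_0):=\tfrac{ku_\infty}{\gamma 2^{\beta+1}a^{\beta+\delta+2}}f_1(z)-z^{\beta+\delta+1}$ and note that $F$ is continuous in $(z,h_0)$, strictly decreasing in $z$ (by the sign of $f_1'$ recorded in (\ref{dg}) together with the strictly increasing right-hand side, exactly as in the uniqueness argument preceding Theorem \ref{teo: P}), and monotone in $h_0$. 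The hard part will be justifying that the convergence of roots is legitimate: I would argue that for $z$ in a fixed interval the convergence $f_1(z)\to f(z)$ is uniform, then use the strict monotonicity in $z$ together with a standard sandwiching argument — given $\varepsilon>0$, evaluate the limiting equation's defining function at $\xi_{D\infty}\pm\varepsilon$, where it is strictly positive respectively strictly negative, and conclude by continuity that for $h_0$ large enough the root $\xi_R(h_0)$ lies in $(\xi_{D\infty}-\varepsilon,\xi_{D\infty}+\varepsilon)$.

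Once $\xi_R(h_0)\to\xi_{D\infty}$ is established, the second claim is immediate: since $s_R(t,h_0)=2\xi_R(h_0)a\sqrt{t}$ and $s_{D\infty}(t)=2\xi_{D\infty}a\sqrt{t}$ by (\ref{FormaGenerals-Conv}) and (\ref{FormaGeneralsD}), for each fixed $t>0$ we have $\lim_{h_0\to+\infty}s_R(t,h_0)=2\xi_{D\infty}a\sqrt{t}=s_{D\infty}(t)$ by continuity of multiplication by $2a\sqrt{t}$.

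For the third claim I would pass to the limit in the explicit representation (\ref{FormaGeneralu-Conv}). Since $u_R(x,t,h_0)=t^{\alpha/2}\left[C_{1R}M\left(-\tfrac{\alpha}{2},\tfrac12,-\eta^2\right)+C_{2R}\eta M\left(-\tfrac{\alpha}{2}+\tfrac12,\tfrac32,-\eta^2\right)\right]$ and the Kummer functions are continuous, it suffices to show that the coefficients $C_{1R}=C_{1R}(h_0)$ and $C_{2R}=C_{2R}(h_0)$, given in Corollary \ref{teo: Conv}, converge to $C_{1D}=u_\infty$ and $C_{2D}=-u_\infty M\left(-\tfrac{\alpha}{2},\tfrac12,-\xi_{D\infty}^2\right)/\left[\xi_{D\infty}M\left(-\tfrac{\alpha}{2}+\tfrac12,\tfrac32,-\xi_{D\infty}^2\right)\right]$ of the limit problem ($P_{D\infty}$). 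Using $\xi_R(h_0)\to\xi_{D\infty}$ and letting $\tfrac{k}{2ah_0}\to 0$ in the formulas for $C_{1R},C_{2R}$, the denominators reduce to $\xi_{D\infty}M\left(-\tfrac{\alpha}{2}+\tfrac12,\tfrac32,-\xi_{D\infty}^2\right)$, which is strictly positive, so the quotients converge by continuity to exactly $C_{1D}$ and $C_{2D}$ with $u_0$ replaced by $u_\infty$. Substituting these limits back into (\ref{FormaGeneralu-Conv}) yields $u_{D\infty}(x,t)$ for every fixed $x>0$, $t>0$, completing the proof.
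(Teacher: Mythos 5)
Your proposal is correct and follows essentially the same route as the paper: pointwise convergence $f_1(\cdot,h_0)\to f$ as the term $\tfrac{k}{2ah_0}M\left(\tfrac{\alpha}{2}+\tfrac12,\tfrac12,z^2\right)$ vanishes, monotonicity of both sides of the defining equations to pass to the limit in the roots, and then continuity of the explicit formulas for $C_{1R}$, $C_{2R}$ and $s_R$. If anything, your $\varepsilon$-sandwiching argument at $\xi_{D\infty}\pm\varepsilon$ makes the root-convergence step more explicit than the paper's version, which instead derives the bound $0<\xi_R(h_0)<\xi_{D\infty}$ from the computed sign of $f(z)-f_1(z,h_0)$ and then asserts the limit directly.
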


\begin{proof}
On the one hand, the free boundary solution to problem ($P_R$) is characterized by a dimensionless parameter $\xi_{R}(h_0)$ that is the unique solution to equation (\ref{EcXiConv}), i.e.
$$f_{1}(z,h_0)=\frac{z^{\beta+\delta+1}}{C_{\infty}},\qquad z>0,$$
where $C_{\infty}=\dfrac{k u_{\infty}}{\gamma a^{\beta+\delta+2} 2^{\beta+1}}$ and $f_{1}(z,h_0):=f_1(z)$ given by (\ref{f1}).
On the other hand, the moving boundary $s_{D\infty}$ is characterized by a dimensionless parameter $\xi_{D\infty}$ which will be defined as the unique solution of the equation (\ref{EcxiTemp}) replacing $u_0$ by $u_{\infty}$, i.e.
$$f(z)=\frac{z^{\beta+\delta+1}}{C_{\infty}},\qquad z>0,$$
where $f$ is defined by (\ref{Funcion-f}).

We are going to prove that when $h_0\to \infty$, the coefficient $\xi_{R}(h_0)$ converges to the coefficient $\xi_{D\infty}$.
We know that $\frac{z^{\beta+\delta+1}}{C_{\infty}}$ is a strictly increasing function that goes from 0 to $+\infty$ when $z$ increases from 0 to $+\infty$; $f$ is a strictly decreasing function  that goes from $+\infty$ to 0 and $f_1(z,h_0)$ is a strictly decreasing function in $z$ as well but decreases from $\frac{2h_0 a}{k}$ to 0 when $z$ goes from 0 to $+\infty$.
After some computations, it can be seen that:
$$f(z)-f_1(z,h_0)=\frac{k}{2a h_0} \frac{f(z) M\left(\tfrac{\alpha}{2}+\tfrac{1}{2},\tfrac{1}{2},z^2 \right)}{\left[\tfrac{1}{f(z)}+\tfrac{k}{2ah_0} M\left(\tfrac{\alpha}{2}+\tfrac{1}{2},\tfrac{1}{2},z^2 \right)    \right]}>0, \qquad z>0.$$
Therefore it can be concluded that $0<\xi_R(h_0)<\xi_{D\infty}$, for all $h_0>0$. In addition, when $h_0\to \infty$ it can be easily seen that $f_{1}(z,h_0)\to f(z)$ and so $\xi_{R}(h_0)\to \xi_{D\infty}$. Once  this equality has been proved, by taking the limit in the definitions of $C_{1R}$ and $C_{2R}$ one can obtain the required convergence for $u_{R}$ and $s_R$.

\end{proof}

\section{Computational examples}

In this section, we present and discuss some computational examples.

From Theorem \ref{teo: P}, the solution to problem $(P)$ is characterized by a dimensionless parameter $\xi$ defined as the unique positive solution to equation (\ref{EcXi}). This equation can be rewritten as $F_\lambda(z)=0$ with
\begin{equation}\label{NewtonGeneral}
F_\lambda(z)= \dfrac{k u_\infty}{\gamma 2^{\beta+1} a^{\beta+\delta+2}} f_\lambda(z)-z^{\beta+\delta+1}=0,\qquad z>0.
\end{equation}
To solve the nonlinear equation $F_\lambda(z)=0$ we apply the following Newton's iteration formula:
\begin{equation}
z_{i+1}=z_i-\dfrac{F_\lambda(z_i)}{F_\lambda'(z_i)},
\end{equation}
where $z_i$ is the value of $z$ at the $i$th iteration step and
\begin{equation}
F'_\lambda(z)=\tfrac{-k u_\infty}{\gamma 2^{\beta+1} a^{\beta+\delta+2}} \tfrac{\left[ \tfrac{k}{2ah_0} (\alpha+1) M\left( \tfrac{\alpha}{2}+\tfrac{3}{2},\tfrac{3}{2},z^2\right)+\lambda M\left( \tfrac{\alpha}{2}+1,\tfrac{1}{2},z^2\right)\right]}{\left[ \tfrac{k}{2ah_0} M\left(\tfrac{\alpha}{2}+\tfrac{1}{2},\tfrac{1}{2},z^2 \right)+\lambda \xi M\left( \tfrac{\alpha}{2}+1,\tfrac{3}{2},z^2\right)    \right]}   -(\beta+\delta+1)z^{\beta+\delta}.
\end{equation}

We have implemented a MATLAB program to compute the dimensionless coefficient $\xi$ for different values of the parameters. The stopping criterion used is the boundedness of the absolute error \mbox{$\vert z_{i+1}-z_i\vert<10^{-10}$}.

In addition, given that the latent heat behaves as a function of the free front, we will plot $L$ in order to show how it changes in time. Observe that
\begin{equation}
L=\gamma s^{\beta}(t)\dot{s}^{\delta}(t)=\gamma \left(2\xi a \sqrt{t} \right)^\beta \left(  \frac{\xi a}{\sqrt{t}}\right)^\delta= \gamma 2^\beta a^{\beta+\delta}\xi^{\beta+\delta} t^{\frac{\beta-\delta}{2}}.
\end{equation}
Therefore it is deduced that the latent heat behaves as a power of time i.e  $L\sim t^p$ with $p<1$ if $\beta-\delta<2$, $p=1$ for $\beta-\delta=2$ and $p>1$ in case $\beta-\delta>2$. It must be pointed out that in all cases $\beta$ should be $\beta\geq \max\lbrace \delta,-1-\delta\rbrace$ in order to meet the hypothesis of Theorem \ref{teo: P}.


Let us first analyse the problem with a Neumann boundary condition at the fixed face. From Corollary \ref{teo: Flujo}, the solution of the problem ($P_N$) is characterized by a dimensionless parameter $\xi_N$ defined as the unique solution of equation (\ref{EcxiFlujo}). This equation can be rewritten as  (\ref{NewtonGeneral}) specifying $\lambda=0$:
\begin{equation}
F_0(z)=\dfrac{Q}{2^\beta}g(z)-z^{\beta+\delta+1}=0,\qquad z>0,
\end{equation}
where $g$ is given by (\ref{f0}) and the dimensionless parameter $Q$ is defined by:
\begin{equation}
Q=\frac{q_0}{\gamma a^{\beta+\delta+1}}.
\end{equation}
 In Table 1 we present  the computational results of $\xi_N$ for different values of $Q$.

\begin{table}[h]
\begin{footnotesize}
\begin{center}
\caption{{\footnotesize Computational results of $\xi_N$}.}
\label{tab:1}       
\begin{tabular}{ll|cccccc}
\hline
								&& $Q=0.1$ \qquad & $Q=0.2$ \qquad  & $Q=0.3$\qquad & $Q=0.4$\qquad     & $Q=0.5$\qquad \\
								\hline
	$\delta=0$,& $\beta=0$		& 0.0990  &  0.1927  &  0.2777   & 0.3531  &  0.5237\\
			   & $\beta=1$		&0.2138   & 0.2912   & 0.3453    &0.3875   & 0.4225\\
\hline
	$\delta=-1/2$,& $\beta=0$	&0.0100   &  0.0398 & 0.0879      &  0.1496 & 0.2172\\
			   & $\beta=1$		& 0.1319  &  0.2016 & 0.2543      & 0.2970  & 0.2952\\
			   \hline
	$\delta=1$,& $\beta=1$		& 0.3534 & 0.4357   & 0.4904    &  0.5321  &  0.5661\\
			   & $\beta=3$		&0.3838  &  0.4323  & 0.4627    &   0.4851  &   0.5031\\
			   \hline
\end{tabular}
\end{center}
\end{footnotesize}
\end{table}

In Figure 1, we plot the coefficient $\xi_N$ that characterizes the free front $s_N$, for different values of the parameters $Q$, $\beta$, $\delta$.

In Figure 2, we can observe graphically  what  can be analytically deduced, in the sense that when $\delta=0$, $\beta=1$, we obtain that the latent heat behaves as power of time, i.e.  $L\sim t^p$ with $p=\frac{1}{2}<1$. In the case that $\delta=1$, $\beta=3$, we obtain that $p=1$ and for $\delta=1$, $\beta=4$, the power becomes $p=\frac{3}{2}>1$.

\begin{flushleft}
\begin{tabular}{cc}
\includegraphics[scale=0.217]{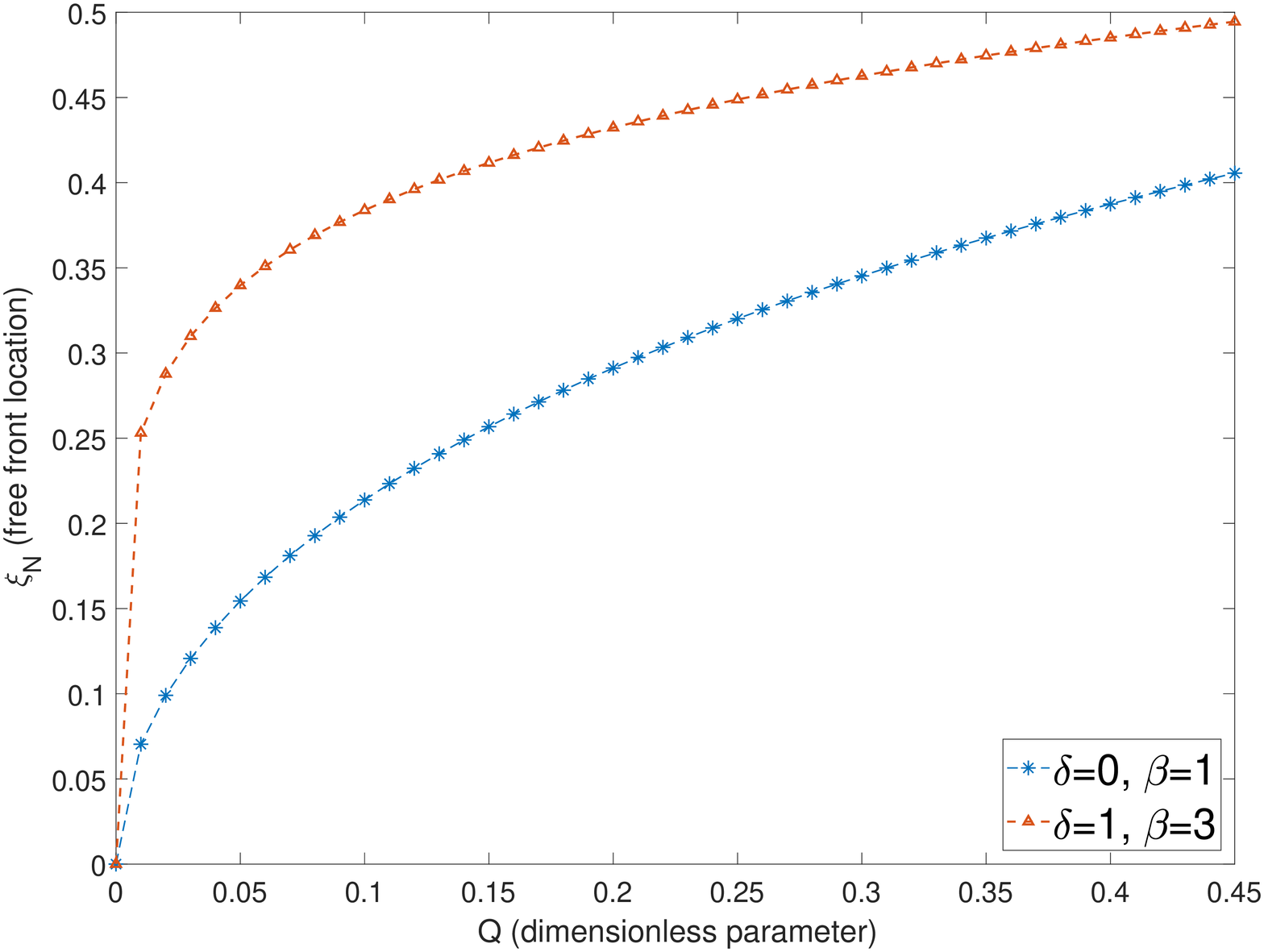} & \includegraphics[scale=0.217]{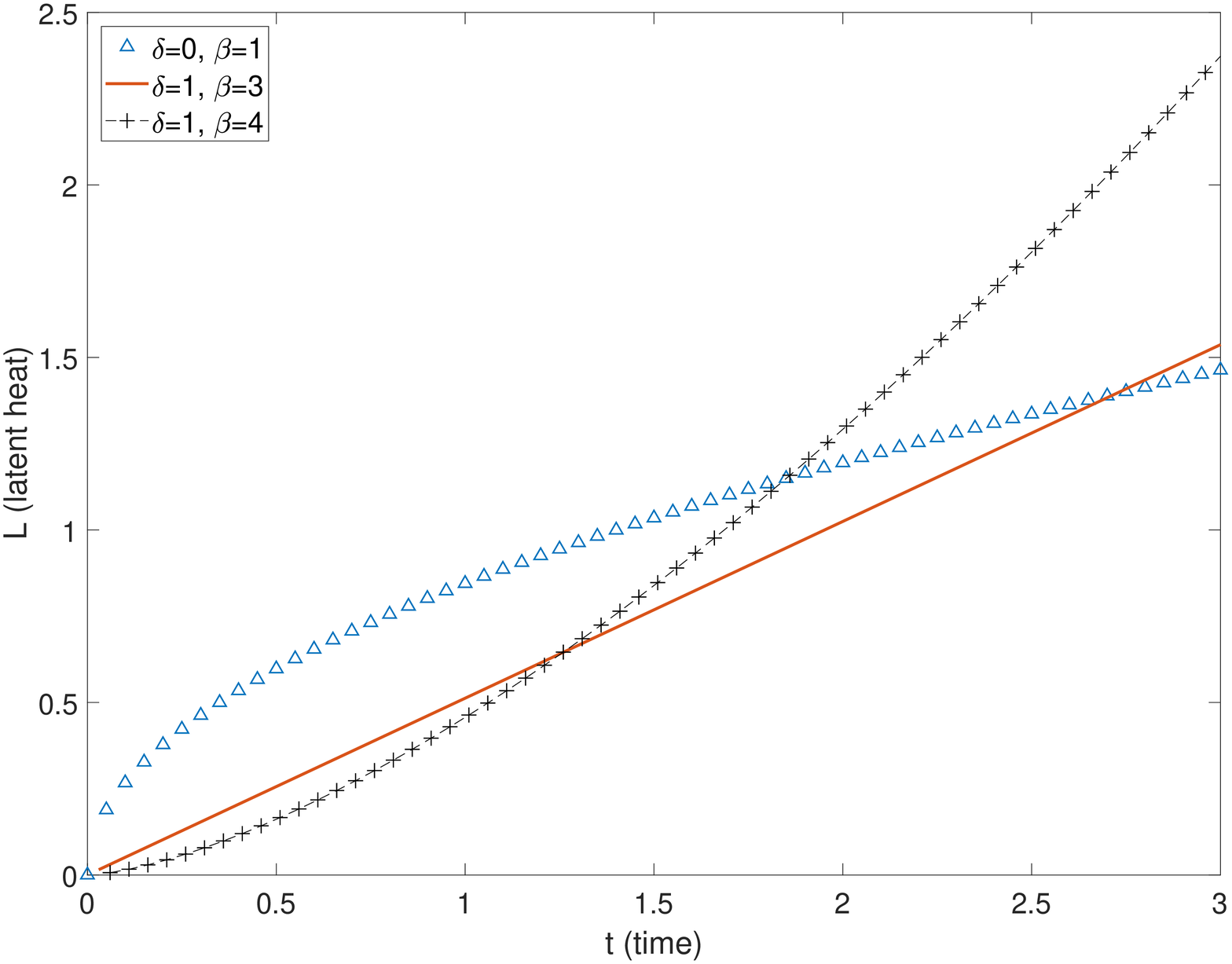}\\
{\scriptsize Figure 1: Plot of $\xi_N$ against  $Q$ for different values of $\beta$ and $\delta$.} & {\scriptsize Figure 2: Plot of $L$ against time for different values }\\
&  {\scriptsize of $\beta$ and $\delta$ assuming $Q=0.5$, $a=1$, $\gamma=1$.}
\end{tabular}
\end{flushleft}

Now, we turn to the problem with a Robin boundary condition at the fixed face. From Corollary \ref{teo: Conv}, the solution of the problem ($P_R$) is characterized by a dimensionless parameter $\xi_R$ defined as the unique solution of equation (\ref{EcXiConv}). This equation can be rewritten as (\ref{NewtonGeneral}) fixing $\lambda=1$:
\begin{equation}
F_1(z)=\dfrac{\text{Ste}}{2^{\beta+1}}f_{1}(z)-z^{\beta+\delta+1}=0,\qquad z>0,
\end{equation}
where $f_{1}$ is given by (\ref{f1}). Introducing the dimensionless  parameter Ste, which constitutes  a generalization of the Stefan number, and   the generalized Biot number
\begin{equation}
\text{Ste}=\dfrac{u_{\infty} k}{\gamma a^{\beta+\delta+2}},\qquad \qquad \text{Bi}=\frac{h_0 k}{a},
\end{equation}
we get that $f_{1}$ can be rewritten
$f_{1}(z)=\frac{1}{\left[\tfrac{1}{2\text{Bi}}M\left( \tfrac{\alpha}{2}+\tfrac{1}{2},\tfrac{1}{2}, z^2 \right)+z M\left( \tfrac{\alpha}{2}+1,\tfrac{3}{2},z^2\right)\right]}.$\\

In Table \ref{tab:2} we present  the computational results of $\xi_R$ for different values of Bi, $\beta$ and $\delta$, fixing Ste=0.5. Observe that the last column of the table intends to show that when Bi increases, $\xi_R$ becomes closer to $\xi_D$ which is the dimensionless parameter of the free front to the problem with Dirichlet condition. This convergence is in agreement with the prior section, taking into account that analysing $h_0\to\infty$ is equivalent to analysing $\text{Bi}\to\infty$.

\begin{table}[h!]
\begin{footnotesize}

\begin{center}
\caption{{\footnotesize Computational results of $\xi_R$}.}
\label{tab:2}       
\begin{tabular}{ll|cccc|c}
\hline
				&	Ste=0.5			& Bi=1 \qquad & Bi=10 \qquad  & Bi=50\qquad & Bi=100\qquad & $\xi_D$ \\
								\hline
	$\delta=0$,& $\beta=0$		&0.2926 &   0.4422  &  0.4601 &   0.4625  &  0.4648\\
			   & $\beta=1$		&0.3490 &   0.4485  &  0.4617  &  0.4635   & 0.4652\\
\hline
	$\delta=-1/2$,& $\beta=0$ & 0.1430  &  0.3375   & 0.3617   & 0.3648  &  0.3680\\
			   & $\beta=1$		& 0.2701 &   0.3837   & 0.3994  &  0.4015 &   0.4036\\
			   \hline
	$\delta=1$,& $\beta=1$		&0.4736 &   0.5514  &  0.5609    &0.5621 &   0.5634\\
			   & $\beta=3$		& 0.4615  &  0.5181 &   0.5260  &  0.5270 &   0.5281\\
			   \hline
\end{tabular}
\end{center}

\end{footnotesize}
\end{table}

In Figure 3,  we plot $\xi_R$ against Bi for different values of $\delta$ and $\beta$, fixing Ste=0.5.

\begin{center}
\includegraphics[scale=0.22]{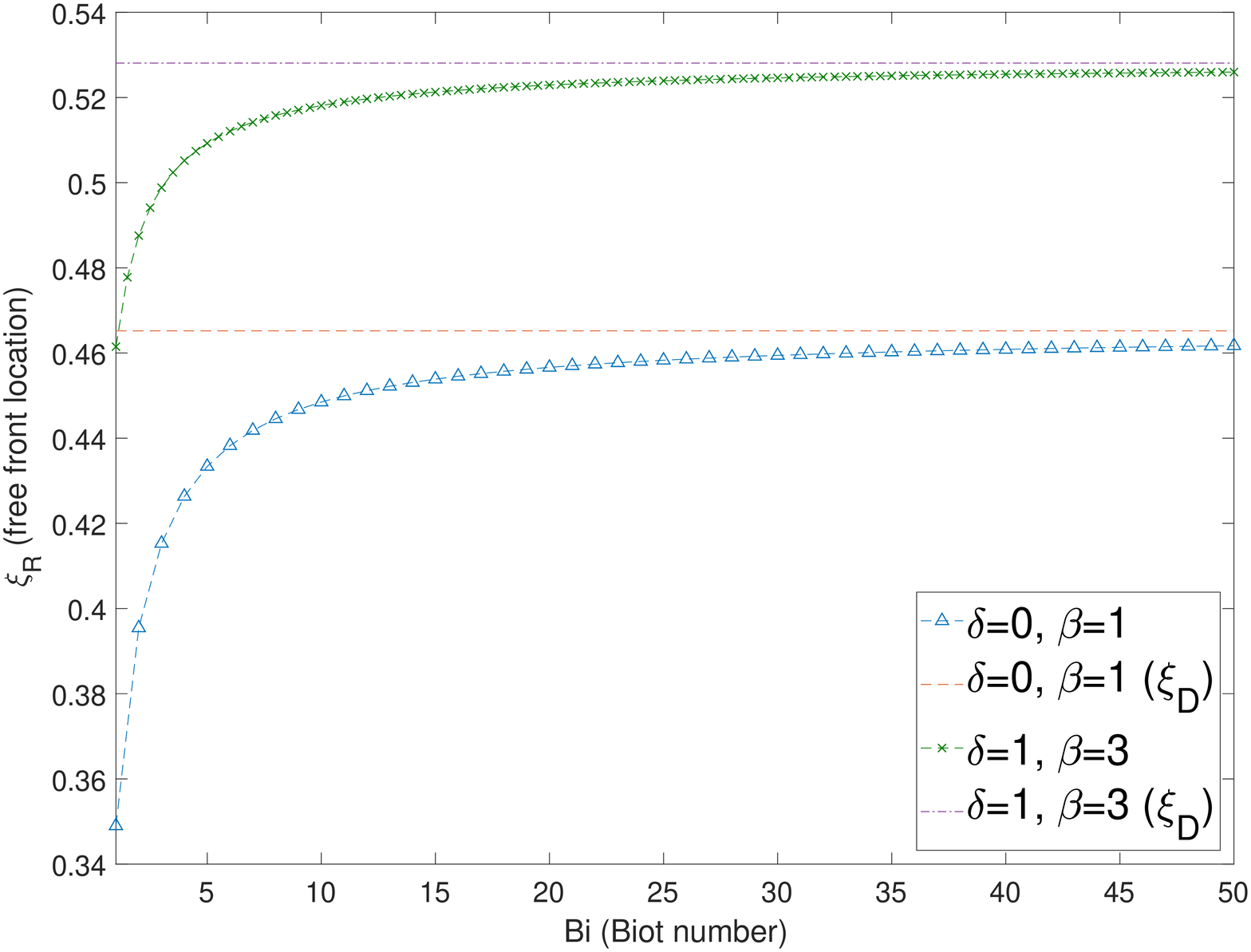}\\
{\footnotesize Figure 3: Plot of $\xi_R$ against  Bi for different values of $\beta$ and $\delta$, with Ste=0.5.}
\end{center}

\section{Conclusions}

In this paper two different one-phase Stefan-like problems were studied for a semi-infinite material. The main feature of both problems resides in the fact that a variable latent heat depending on the power of the position and the rate of change of the moving boundary is considered ($L=\gamma s^{\beta} \dot{s}^{\delta}$). Using Kummer functions, exact solutions of similarity type were obtained for the cases when a Neumann or Robin boundary conditions are imposed at the fixed face.

In addition, the necessary and sufficient relationships  between the data of the two problems in order to obtain an equivalence with the problem with a Dirichlet condition are obtained.

For the problem with Robin boundary condition,  the limit behaviour of the solution when the heat transfer coefficient at the fixed face goes to infinity was analysed, obtaining as a result, the convergence to the solution of a Stefan problem with a Dirichlet boundary condition.

This paper constitutes a mathematical generalization of the classical one  because it can be obtained by fixing the parameters $\beta=1$, $\delta=0$. Also, the results obtained when  a latent heat is considered  as a linear or power function of the position of the free boundary can be recovered.

We have provided tables and plots in order to show how the free front evolves in each case for specific values of the parameters.

It is worth to mention that finding exact solutions is meaningful not only to understand better the physical  processes involved  but also to
verify the accuracy of numerical methods that solve Stefan problems.

\section*{Acknowledgements}

We would like to thank three anonymous referees for their constructive comments which
improved the readability of the manuscript. The present work has been partially sponsored by the Projects PIP No 0534 from CONICET-UA and ANPCyT PICTO Austral 2016 No 0090, Rosario, Argentina.

\small{
}

\end{document}